\DeclareMathAlphabet{\mathcal}{OMS}{cmsy}{m}{n}
\DeclareSymbolFont{largesymbols}{OMX}{cmex}{m}{n}
\newtheorem{thm}{Theorem}
\newtheorem{lemma}{Lemma}
\newtheorem{false statement}{False statement}
\theoremstyle{definition}
\newtheorem{definition}{Definition}
\newtheorem{claim}{Claim}
\newtheorem{claimm}{Claim}
\newtheorem{claimmm}{Claim}
\newtheorem{claimmmm}{Claim}
\newtheorem{remark}
{Remark}
\newtheorem{case}{Case}
\newtheorem{casee}{Case}
\newtheorem{problem}{Problem}
\begin{document}

\title{Spectral extremal problems on outerplanar and planar graphs
\footnote{Supported by NSFC (Nos. 12361071 and 11901498).}}
\author{{Xilong Yin, Dan Li}\thanks{Corresponding author. E-mail: ldxjedu@163.com.}\\
{\footnotesize College of Mathematics and System Science, Xinjiang University, Urumqi 830046, China}}
\date{}

\maketitle {\flushleft\large\bf Abstract:}
Let $\emph{spex}_{\mathcal{OP}}(n,F)$ and $\emph{spex}_{\mathcal{P}}(n,F)$ be the maximum spectral radius over all $n$-vertex $F$-free outerplanar graphs and planar graphs, respectively. 
Define $tC_l$ as $t$ vertex-disjoint $l$-cycles, $B_{tl}$ as the graph obtained by sharing a common vertex among $t$ edge-disjoint $l$-cycles 
and $(t+1)K_{2}$ as the disjoint union of $t+1$ copies of $K_2$.
In the 1990s, Cvetković and Rowlinson conjectured $K_1 \vee P_{n-1}$ maximizes spectral radius in outerplanar graphs on $n$ vertices, while Boots and Royle (independently, Cao and Vince) conjectured $K_2 \vee P_{n-2} $ does so in planar graphs.
Tait and Tobin [J. Combin. Theory Ser. B, 2017] determined the fundamental structure as the key to confirming these two conjectures for sufficiently large $n.$
Recently, Fang et al. [J. Graph Theory, 2024] characterized the extremal graph with $\emph{spex}_{\mathcal{P}}(n,tC_l)$ in planar graphs by using this key.
In this paper, we first focus on outerplanar graphs and adopt a similar approach to describe the key structure of the connected extremal graph with $\emph{spex}_{\mathcal{OP}}(n,F)$, where
$F$ is contained in $K_1 \vee P_{n-1}$ but not in $K_{1} \vee ((t-1)K_2\cup(n-2t+1)K_1)$.
Based on this structure, we determine $\emph{spex}_{\mathcal{OP}}(n,B_{tl})$ and $\emph{spex}_{\mathcal{OP}}(n,(t+1)K_{2})$ along with their unique extremal graphs for all $t\geq1$, $l\geq3$ and large $n$.
Moreover, we further extend the results to planar graphs, characterizing the unique extremal graph with $\emph{spex}_{\mathcal{P}}(n,B_{tl})$ for all $t\geq3$, $l\geq3$ and large $n$.

\vspace{0.1cm}
\begin{flushleft}
\textbf{Keywords:} Spectral radius; Planar graph; Outerplanar graph; Cycles
\end{flushleft}
\textbf{AMS Classification:} 05C50; 05C35

\section{Introduction}
For a graph $G$, let $A(G)$ be the adjacency matrix of the graph $G$, and let $\rho(G)$ be its spectral radius(i.e., the maximum modulus of eigenvalues of $A(G)$).
Given a graph family $\mathcal{F}$, a graph is said to be $\mathcal{F}$-free if it does not contain any $F\in\mathcal{F}$ as a subgraph.
When $\mathcal{F}=\{F\}$, we write $F$-free instead of $\mathcal{F}$-free. Let $\emph{spex}(n,F)$ be the maximum spectral radius over all $F$-free graphs of order $n$, and SPEX$(n,F)$ be the family of $F$-free graphs of order $n$ with spectral radius equal to $\emph{spex}(n,F).$

The \emph{Brualdi-Solheid problem} \cite{R.A. Brualdi}, a classic problem in spectral graph theory, seeks the graph with the maximum spectral radius within a given class. 
In 2010, Nikiforov \cite{V. Nikiforov-4} proposed a variation of Brualdi-Soheid problem which asks: given a graph $F$, what is the maximum spectral radius of an $F$-free graph of order $n$?
In the past decades, many researchers have studied this variation for forbidding various subgraphs, such as complete graphs $K_r$ \cite{B. Bollobás,H. Wilf}, independent edges $M_{k}$ \cite{L.H. Feng}, paths $P_{k}$ \cite{V. Nikiforov-4}, friendship graphs $F_{k}$ \cite{V. Nikiforov-3}, cycles \cite{V. Nikiforov-1,V. Nikiforov-2,M.Q. Zhai-1,M.Q. Zhai-2}, wheels $W_{k}$ \cite{Y.H. Zhao,S. Cioab˘a-2},
stars $\bigcup _{i= 1}^{k}S_{a_{i}}$ \cite{M.Z. Chen-1} and linear forests $\bigcup _{i= 1}^{k}P_{a_{i}}$ \cite{M.Z. Chen-2}. For more about this topic, we refer readers to three surveys \cite{M.Z. Chen-3,Y.T. Li,V. Nikiforov-3}.
Cioabă, Desai and Tait \cite{S. Cioab˘a-1} confırmed a famous conjecture of Nikiforov \cite{V. Nikiforov-4} about charactering the extremal graphs in SPEX$(n,C_{2k})$ for any $k\geq3$ and sufficiently large $n$.
Very recently, Fang, Lin, Shu and Zhang \cite{L.F. Fang-1} characterized the extremal graphs with $\emph{spex}(n,F)$ for various specific trees $F$.

Research on spectral radius in planar graphs is productive, originating with a question posed by Schwenk and Wilson \cite{A. J. Schwenk}: What about the eigenvalues of a planar graph?
This led to the extension of the Brualdi-Solheid problem to planar graphs.
Given two graphs $G_1$ and $G_2$,
$G_1\vee G_2$ denotes their \emph{join} which is the graph obtained by adding all possible edges between $G_1$ and $G_2.$
Boots and Royle \cite{B. N. Boots}, and independently, Cao and Vince \cite{D. Cao}, conjectured that $K_2 \vee P_{n-2}$ attains the maximum spectral radius among all planar graphs on $n\geq9$ vertices. 
In 2017, Tait and Tobin \cite{M. Tait} confirmed that the conjecture is true for sufficiently large $n$. Up to now, for small $n$, this conjecture is still open.
The study of the Brualdi-Solheid problems for planar graphs with specific forbidden subgraphs is also becoming a fruitful topic in recent times.
In 2022, Zhai and Liu \cite{M.Q. Zhai-2} determined the extremal graphs in SPEX$_\mathcal{P}(n,\mathcal{F})$ when $\mathcal{F}$ is the family of $k$ edge-disjoint cycles.
In 2023, Fang, Lin and Shi \cite{L.F. Fang-2} characterized the extremal graphs in SPEX$_\mathcal{P}(n,tC_\ell)$ and SPEX$_\mathcal{P}(n,tC)$, where $tC_\ell$ is $t$ vertex-disjoint $\ell$-cycles, and $tC$ is the family of $t$ vertex-disjoint cycles without length restriction.
Very recently, for sufficiently large $n$, Wang, Huang and Lin \cite{X.L. Wang} determined the extremal graphs in $\operatorname{SPEX}_{\mathcal{P}}(n,W_{k}),\operatorname{SPEX}_{\mathcal{P}}(n,F_{k})$ and SPEX$_\mathcal{P}(n,(k+1)K_{2})$, where $W_{k},F_{k}$ and $(k+1)K_{2}$ are the wheel graph of order $k$, the friendship graph of order $2k+1$ and the disjoint union of $k+1$ copies of $K_2$, respectively.
Similarly, Zhang and Wang \cite{H.R. Zhang} characterized the unique extremal planar graph with the maximum spectral radius among $C_{l,l}$-free planar graphs and $\theta_k$-free planar graphs on $n$ vertices,
where $C_{l,l}$ be a graph obtained from $2C_l$ such that the two cycles share a common vertex and $\theta_k$ are the Theta graph of order $k \geq 4.$

In fact, even earlier than the Boots–Royle–Cao–Vince conjecture, Cvetković and Rowlinson \cite{D. Cvetković} had already considered the Brualdi-Solheid problem in outerplanar graphs and proposed a conjecture on the maximum spectral radius of outerplanar graphs in 1990.
In 2017, Tait and Tobin \cite{M. Tait} confirmed that the conjecture is true for sufficiently large $n$.
Recently, this conjecture has been completely confirmed by Lin and Ning \cite{H.Q. Lin}.
Inspired by the aforementioned results on planar graphs, we naturally focus on the Brualdi-Solheid problems for outerplanar graphs with specific forbidden subgraphs.
These results are almost always obtained by determining the fundamental structure of the extremal graph with $\emph{spex}_{\mathcal{P}}(n,F)$ for certain specific graphs $F$.
With regard to the outerplanar spectral extremal graph, we raise the following analogous question:

\begin{problem}\label{p3}
What is the fundamental structure of outerplanar spectral extremal graph that do not contain a subgraph $F$ with certain characteristics?
\end{problem}

In this paper, we first answer the question.
Our result provide a structural theorem for connected extremal graphs with $\emph{spex}_\mathcal{OP}(n,F)$
under the condition that $F$ is an outerplanar graph contained in $K_1 \vee P_{n-1}$ but not in $K_{1} \vee ((t-1)K_2\cup(n-2t+1)K_1)$ and $n$ is sufficient large relative to the order of $F$,
which extends a result of Fang, Lin and Shi \cite{L.F. Fang-2} to outerplanar graphs.
As usual, we denote by $V(G)$ the vertex set and $E(G)$ the edge set. For $S \subseteq V(G)$, $G[S]$ denotes the subgraph of $G$ induced by $S$.
For any vertex $v \in V(G)$, $N_G(v)$ denotes the set of neighbors of $v$ in $G$.

\begin{thm}\label{thm1}
 Let $F$ be an outerplanar graph contained in $K_{1} \vee P_{n-1}$ but not contained in $K_{1} \vee ((t-1)K_2\cup(n-2t+1)K_1)$,
where $1 \leq t \leq \frac{n-1}{2} $ and $n\geq max\{1.27\times 10^7,[(64+|V(F)|)^\frac{1}{2}+8]^2\}$. Suppose that $G$ is a connected extremal graph with $\textit{spex}_{\mathcal{OP}}(n,F)$ and $X$ 
is the positive eigenvector of $\rho(G)$ with $\emph{max}_{v\in V(G)}x_v = 1 $.
Then there exists a vertice $u \in V(G)$
   such that $N_G(u)=V(G)\setminus\{u\}$ and $x_{u}=1$, in especially,

 \begin{description}
   \item[(i)]  $G$ contains a copy of $K_{1,n-1}.$
   \item[(ii)] The subgraph $G[N_G(u)]$ is a disjoint union of some paths.
 \end{description}
\end{thm}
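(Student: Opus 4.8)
The plan is to take $u$ to be a maximum-entry vertex and prove that it dominates $G$; once this is done, (i) is immediate and (ii) is read off from outerplanarity. First I would fix the explicit competitor $H_0=K_1\vee((t-1)K_2\cup(n-2t+1)K_1)$, which is connected, outerplanar and, by hypothesis, $F$-free; hence $\rho:=\rho(G)\ge\rho(H_0)\ge\rho(K_{1,n-1})=\sqrt{n-1}$. Choosing $u$ with $x_u=\max_v x_v=1$, the eigen-equation gives $\rho=\sum_{w\in N_G(u)}x_w\le d_G(u)$, so $d_G(u)\ge\sqrt{n-1}$. I will repeatedly use three standard outerplanar facts: $e(G)\le 2n-3$; any two vertices have at most two common neighbours (no $K_{2,3}$); and $K_1\vee H$ is outerplanar iff $H$ is a disjoint union of paths.

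Next I would bound the total weight $s=\sum_v x_v$. From $\rho s=\sum_w d_G(w)x_w\le\sum_w d_G(w)=2e(G)\le 4n-6$ together with $\rho\ge\sqrt{n-1}$, I obtain $s=O(\sqrt n)$. Since $G[N_G(u)\cup\{u\}]=K_1\vee G[N_G(u)]$ is a subgraph of $G$ and hence outerplanar, $G[N_G(u)]$ is a linear forest (this already yields (ii)); moreover every $z\in R:=V(G)\setminus(N_G(u)\cup\{u\})$ shares at most two neighbours with $u$, i.e. $\deg_{N_G(u)}(z)\le 2$. Expanding $\rho^2=\rho^2 x_u$ as a weighted count of walks of length two and using the "at most two common neighbours" bound gives
\[
\rho^2=d_G(u)+\sum_{y\ne u}|N_G(u)\cap N_G(y)|\,x_y\le d_G(u)+2s.
\]
Combined with $\rho^2\ge n-1$ this yields $d_G(u)\ge n-1-2s\ge n-O(\sqrt n)$, and the threshold $n\ge[(64+|V(F)|)^{1/2}+8]^2$ (equivalently $n-16\sqrt n\ge|V(F)|$) forces $d_G(u)\ge|V(F)|$; it also bounds the non-neighbours by $|R|\le 2s=O(\sqrt n)$.

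Now suppose for contradiction that $R\ne\varnothing$. I would pick a non-cut vertex $z\in R$ and form $G'$ by deleting all edges at $z$ and adding the single edge $uz$. Deleting edges preserves outerplanarity and adjoining a pendant at $u$ preserves it, so $G'$ is outerplanar and connected; it is $F$-free because any copy of $F$ in $G'$ must use the pendant $z$, and since $F\subseteq K_1\vee P_{n-1}$ has a leaf attached to its apex while $d_G(u)\ge|V(F)|$, that leaf could be re-routed to a spare neighbour of $u$, forcing $F\subseteq G$, a contradiction. Finally, using that $X$ is the Perron vector of $G$ and that $\sum_{w\in N_G(z)}x_w=\rho x_z$,
\[
\rho(G')-\rho(G)\ge\frac{X^{\top}\bigl(A(G')-A(G)\bigr)X}{X^{\top}X}=\frac{2x_z\,(1-\rho x_z)}{X^{\top}X}>0,
\]
provided $\rho x_z<1$. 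This strict increase contradicts the extremality of $G$, so $R=\varnothing$; thus $N_G(u)=V(G)\setminus\{u\}$ with $x_u=1$, which is the main assertion, and (i) and (ii) follow.

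The crux, and the step I expect to be hardest, is establishing the pointwise inequality $\rho x_z<1=x_u$ for the relocated $z\in R$. Crude estimates give only $\rho x_z=O(1)$ with a constant well above $1$, so one must exploit finer structure: each $z\in R$ has at most two neighbours in $N_G(u)$, every $w\in N_G(u)$ satisfies $\rho x_w=1+\sum_{y\sim w,\,y\ne u}x_y$ and hence $x_w=O(1/\sqrt n)$, and the residual weight $\sum_{w\in R}x_w$ carried by the $O(\sqrt n)$ non-neighbours must be shown to be $o(1)$. Pushing these estimates until $\rho x_z<1$ is precisely where the explicit hypothesis $n\ge 1.27\times 10^7$ is consumed. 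A secondary technical point is choosing a suitable $z\in R$ without disconnecting $G$ (relocating an entire pendant block of $R$ if necessary), which the same weight bounds render harmless.
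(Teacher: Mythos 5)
Your overall route is the same as the paper's: lower-bound $\rho$ by an $F$-free competitor (the paper uses $K_1\vee((t-1)K_2\cup(n-2t+1)K_1)$ via the wheel, you use $K_{1,n-1}$ inside it), show the maximum-entry vertex $u$ has degree $n-O(\sqrt n)$ by expanding $\rho^2 x_u$ and invoking the no-$K_{2,3}$ property, relocate non-neighbours of $u$ to become pendants at $u$, certify $F$-freeness of the modified graph by substituting spare neighbours of $u$ for the relocated vertices, and conclude with the Rayleigh quotient; part (ii) then follows from outerplanarity exactly as in the paper's final claim. Those parts are sound (your concern about picking a non-cut vertex $z$ is unnecessary: the comparison graph $G'$ need not be connected to contradict extremality, and relocating all of $R$ simultaneously, as the paper does, sidesteps the issue entirely).

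However, there is a genuine gap at exactly the step you yourself flag as the crux: you never prove $\rho x_z<1$, equivalently $\sum_{y\in R}x_y=o(1)$; you only assert that it ``must be shown.'' Moreover, the ingredients you list do not suffice as stated. Bounding $x_w=O(1/\sqrt n)$ for $w\in N_G(u)$ controls the contribution to $\rho x_z$ from the at most two neighbours of $z$ in $N_G(u)$, but it says nothing about the edges of $z$ inside $R$: with only $x_y\le 1$, $e(R)\le 2|R|-3$ and $|R|=O(\sqrt n)$, one gets $\sum_{y\in R}x_y=O(1)$ with a constant in the range $40$--$50$, far above $1$. What is missing is a pointwise bound $x_y=O(1/\sqrt n)$ for the vertices $y\in R$ themselves. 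The paper obtains this (its Claims 2--4 in the proof of Theorem 1) by a degree dichotomy: any vertex $v\neq u'$ with $d_G(v)\ge 16\sqrt n$ would, by inclusion--exclusion against $d_G(u')\ge n-15\sqrt n$, share at least $\sqrt n$ neighbours with $u'$ and create a $K_{2,3}$; hence $d_G(v)<16\sqrt n$, which combined with $d_G(v)\ge nx_v-15\sqrt n$ gives $x_v<31/\sqrt n$ for every $v\neq u'$. Since no neighbour of a vertex $y\in R$ is $u'$, this pointwise bound yields $\rho\sum_{y\in R}x_y\le \frac{31}{\sqrt n}\left(e(A,R)+2e(R)\right)=O(1)$, i.e.\ $\sum_{y\in R}x_y=O(1/\sqrt n)$, and only then does your Rayleigh step go through. (A two-round bootstrap would also work: first $\sum_{y\in R}x_y=O(1)$ crudely, hence $x_y=O(1/\rho)$ pointwise on $R$, hence $\sum_{y\in R}x_y=O(1/\sqrt n)$; but you carry out neither argument.) So the proposal correctly locates the difficulty but does not resolve it, and this estimate is the heart of the theorem.
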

\vspace*{2mm}
Recall that $B_{tl}$ be a graph obtained by sharing a common vertex among $t$ edge-disjoint $l$-cycles and $(t+1)K_{2}$ be a disjoint union of $t+1$ copies of $K_2$.
As applications of Theorem \ref{lm1}, we consider the Brualdi-Solheid problem for $B_{tl}$-free and $(t+1)K_{2}$-free outerplanar graphs, respectively.

\begin{problem}\label{p1}
Which outerplanar graphs have the $\emph{spex}_{\mathcal{OP}}(n,B_{tl})$ and $\emph{spex}_{\mathcal{OP}}(n,(t+1)K_{2})$?
\end{problem}
If $B_{tl}$ is not contained in $K_1\vee P_{n-1}$,
then $K_1 \vee P_{n-1}$ is $B_{tl}$-free. Note that $K_1 \vee P_{n-1}$ is the spectral extremal graph over all outerplanar graph for sufficiently large $n$.
This implies that $K_1 \vee P_{n-1}$ is the extremal graph to $spex_{\mathcal{OP}}(n,B_{tl})$ for $B_{tl}$ is not contained in $K_1 \vee P_{n-1}$ and sufficiently large $n.$ 

\begin{definition}
For three positive integers $n,n_1,n_2$ with $n\geq n_1>n_2 \geq 1$. Define $H(n_1, n_2)$ as follows:
\[  
H_{\mathcal{OP}}(n_1, n_2) = \begin{cases}   
P_{n_1} \cup \frac{n-1-n_1}{n_2} P_{n_2}, & \text{if } n_2|(n-1-n_1); \\  
P_{n_1} \cup \left\lfloor \frac{n-1-n_1}{n_2} \right\rfloor P_{n_2} \cup P_{n-1-n_1-\left\lfloor \frac{n-1-n_1}{n_2} \right\rfloor n_2}, & \text{otherwise.}  
\end{cases}  
\]
\end{definition}

In this paper, we give the answers to the Problem \ref{p1}. 
The main outcomes of our investigation are as follows:

\begin{thm}\label{thm2}
For integers $l\geqslant3$ , $t\geqslant1$ and $n\geq max\{1.27\times 10^7,6.5025\times2^{(t-1)(l-1)+l}\}$, the graph $B$ is the extremal graph to $\textit{spex}_{\mathcal{OP}}(n,B_{tl})$,
where 
 \begin{description}
   \item[(i)] $B\cong K_1\vee H_{\mathcal{OP}}(l-2,l-2)$ is the extremal graph to $\textit{spex}_{\mathcal{OP}}(n,B_{tl})$ for $t=1$ and $l\geq 3$.
   \item[(ii)] $B\cong K_1\vee H_{\mathcal{OP}}(tl-t-1,l-2)$ is the extremal graph to $\textit{spex}_{\mathcal{OP}}(n,B_{tl})$ for $t\geqslant2$ and $l\geq 3$.
 \end{description}
\end{thm}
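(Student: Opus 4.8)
The plan is to combine the structural result of Theorem~\ref{thm1} with a self-consistent analysis of the Perron eigenvector on a linear forest. First I would verify that $F=B_{tl}$ meets the hypotheses of Theorem~\ref{thm1}: it is outerplanar, it embeds in $K_1\vee P_{n-1}$ once $t(l-1)\le n-1$ (place the $t$ cycles on $t$ disjoint subpaths on $l-1$ vertices sharing the apex), and it fails to embed in $K_{1}\vee((t'-1)K_2\cup(n-2t'+1)K_1)$ for a suitable $t'$ (namely $t'=t$ when $l=3$, and any $t'\ge 1$ when $l\ge4$, since each $l$-cycle through the apex needs a ground path on $l-1\ge 3$ vertices while the components there have at most two vertices). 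Thus a connected extremal graph $G$ satisfies $G=K_1\vee\mathcal L$, where $u$ is a dominating vertex with $x_u=1$ and $\mathcal L:=G[N_G(u)]$ is a linear forest on $n-1$ vertices, with components $C$ of orders $m_C$. The next step translates $B_{tl}$-freeness into a condition on $\mathcal L$: for $t\ge2$ the common vertex of a would-be $B_{tl}$ has degree $2t\ge4$ while every vertex of $\mathcal L$ has degree at most $3$ in $G$, so this common vertex must be $u$, and $t$ edge-disjoint $l$-cycles through $u$ correspond exactly to $t$ vertex-disjoint copies of $P_{l-1}$ in $\mathcal L$. Since a linear forest contains $\sum_C\lfloor m_C/(l-1)\rfloor$ disjoint copies of $P_{l-1}$, the graph $G$ is $B_{tl}$-free if and only if $\sum_C\lfloor m_C/(l-1)\rfloor\le t-1$ (for $t=1$ this reads: every $m_C\le l-2$).

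The heart of the argument is a clean identity. Writing $\rho x_v=1+\sum_{w\sim v,\,w\in\mathcal L}x_w$ at each $v\in\mathcal L$ and summing over a path component $C$ of order $m$, the endpoint/interior degree bookkeeping gives $(\rho-2)\sum_{v\in C}x_v=m-2e(m)$, where $e(m)=e(m;\rho)$ is the common value of $X$ at the two (symmetric) endpoints of $C$. Summing over all components and using $\rho=\rho x_u=\sum_{v\in\mathcal L}x_v$ yields
\[
\rho(\rho-2)=(n-1)-2\sum_{C}e(m_C).
\]
Because $\rho\ge\sqrt{n-1}$ is large and $\rho(\rho-2)$ is increasing, maximizing $\rho$ is equivalent to minimizing the total endpoint weight $\sum_C e(m_C)$; more precisely, comparing the increasing functions $\rho\mapsto\rho(\rho-2)-(n-1)+2\sum_C e(m_C;\rho)$ for two admissible compositions shows that the composition with the pointwise-smaller endpoint sum has the strictly larger spectral radius. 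So the problem reduces to minimizing $\sum_C e(m_C;\rho)$ over all compositions $\{m_C\}$ of $n-1$ subject to $\sum_C\lfloor m_C/(l-1)\rfloor\le t-1$, uniformly for $\rho$ in the relevant window.

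Solving the linear recurrence explicitly in terms of $y^\ast=1/(\rho-2)$ and the root $\lambda=(\rho+\sqrt{\rho^2-4})/2>1$, one shows that for large $\rho$ the sequence $m\mapsto e(m;\rho)$ is positive, strictly increasing, and strictly concave with geometrically decaying increments; setting $e(0)=0$ makes it strictly subadditive. Subadditivity lets me merge components (which can only decrease $\sum_C e$) whenever the budget permits, forcing every budget-free component to the maximal length $l-2$ and every budget-using component to the ``full'' length $b(l-1)+(l-2)$. After this reduction, if $s$ components use budgets $b_1,\dots,b_s\ge1$ with $\sum_i b_i=t-1$, then $\sum_C e(m_C)=\mathrm{const}+\sum_i g(b_i)$, where $g(b)=e\bigl(b(l-1)+l-2\bigr)-e(l-2)$ is concave with $g(0)=0$, hence subadditive; therefore $\sum_i g(b_i)\ge g(t-1)$ with equality if and only if $s=1$. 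This pins the minimizer to a single long path on $(t-1)(l-1)+(l-2)=tl-t-1$ vertices together with copies of $P_{l-2}$, i.e.\ $\mathcal L\cong H_{\mathcal{OP}}(tl-t-1,l-2)$ for $t\ge2$ and $\mathcal L\cong H_{\mathcal{OP}}(l-2,l-2)$ for $t=1$, matching (ii) and (i); strictness gives uniqueness.

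The main obstacle will be the analytic input: establishing the strict concavity (equivalently strict subadditivity) of $e(\cdot;\rho)$ \emph{uniformly} over the whole admissible window $\rho\in[\sqrt{n-1},\sqrt{n}+O(1)]$. This is delicate because the competing configurations (one long path versus several medium paths) have identical edge counts, so the leading asymptotics of $\sum_C e(m_C)$ cancel and the comparison is decided only at order $\Theta(\rho^{-2})$; one must therefore control the next order of the eigenvector rather than just its magnitude, which is where the large lower bound on $n$ is used. A secondary technical point is the budget-respecting exchange argument (the reduction to ``full'' components), where the floor constraint $\sum_C\lfloor m_C/(l-1)\rfloor\le t-1$ interacts with merging, so each compression step must be checked to preserve $B_{tl}$-freeness. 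Finally, I would confirm that the claimed extremal graph is genuinely outerplanar (being a subgraph of the fan $K_1\vee P_{n-1}$) and $B_{tl}$-free, and that restricting to connected competitors loses nothing, to conclude.
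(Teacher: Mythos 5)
Your proposal is sound and reaches the theorem by a genuinely different route than the paper. The common ground is the reduction via Theorem \ref{thm1}; after that you diverge. Your translation of $B_{tl}$-freeness of $K_1\vee\mathcal{L}$ into the budget constraint $\sum_C\lfloor m_C/(l-1)\rfloor\le t-1$ (for $t\ge2$ the hub must be the dominating vertex by the degree argument) is correct and is actually sharper than what the paper records: for $t\ge 2$ the paper only uses the necessary condition $n_1\le(t-1)(l-1)+l-2$ plus an ad hoc bound on $n_2$, and it outsources $t=1$, $l\in\{3,4\}$ to Nikiforov and Zhai--Wang, whereas your constraint covers all cases uniformly. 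The comparison engine is where the routes differ. The paper's is Lemma \ref{lm1}: a local $(s_1,s_2)$-transformation strictly increases the spectral radius, proved by inductively confining eigenvector differences to boxes $A_i,B_i$ whose radii double with $i$ --- this doubling is exactly what forces the hypothesis $n\ge 6.5025\times2^{(t-1)(l-1)+l}$. Yours is the exact identity $\rho(\rho-2)=(n-1)-2\sum_Ce(m_C;\rho)$ (correct: sum the eigenvalue equations over each path component, using $x_u=1$ and the symmetry of the Perron vector on each component), which converts the spectral problem into minimizing total endpoint weight subject to the budget. Solving the recurrence gives the closed form $e(m;\rho)=\frac{2\sinh(\theta/2)}{\rho-2}\cdot\frac{\sinh(m\theta/2)}{\cosh((m+1)\theta/2)}$ with $\rho=2\cosh\theta$, from which $e(0)=0$ and strict monotonicity and strict concavity in $m$ hold for \emph{every} $\rho>2$ (write $e$ as an increasing concave function of $\tanh(m\theta/2)$); so the ``delicate uniformity'' you flag as the main obstacle is a non-issue, there is no cancellation to fight, and your route would in fact remove the exponential-in-$tl$ lower bound on $n$ that the paper's method needs. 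What the paper's approach buys is locality and reuse (Lemma \ref{lm1} also drives Theorem \ref{thm3}) and freedom from optimization bookkeeping; what yours buys is a more conceptual argument with visibly better constants and a uniform treatment of all cases. Two points you must still write out, both routine consequences of your concavity statement: (i) the equivalence ``maximize $\rho$ iff minimize $\sum_Ce$'' needs $\rho\mapsto\rho(\rho-2)-(n-1)+2\sum_Ce(m_C;\rho)$ to be strictly increasing on $[\sqrt{n-1},\infty)$, and since the number of components is $\Theta(n)$ this requires the bound $\sum_C|\partial_\rho e(m_C;\rho)|\le n/(\rho-2)^2=O(1)\ll2\rho-2$; and (ii) in the compression step you must rule out unused budget (with slack, any merge is admissible and strictly decreases the objective, so the optimum exhausts the budget) and show that the single non-full leftover component carries budget zero at the optimum, because the number of components --- hence the ``constant'' in your reduction to $\sum_ig(b_i)$ --- depends on how much budget is consumed.
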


\begin{thm}\label{thm3}
For integers $t\geqslant1$ and $n\geqslant N+\frac{3}{2}+3\sqrt{N-\frac{7}{4}}$ where $N=max\{1.27\times 10^7,6.5025\times2^{2t+1}\}$, the graph $M_t$ is the extremal graph to $\textit{spex}_{\mathcal{OP}}(n,(t+1)K_{2})$, where
 \begin{description}
   \item[(i)] $M_t \cong K_{1,n-1}$ is the extremal graph to $\textit{spex}_{\mathcal{OP}}(n,(t+1)K_{2})$ for $t=1$;
   \item[(ii)] $M_t \cong K_1\vee H_{\mathcal{OP}}(2t-1,1)$ is the extremal graph to $\textit{spex}_{\mathcal{OP}}(n,(t+1)K_{2})$ for $t\geqslant2$.
 \end{description}
\end{thm}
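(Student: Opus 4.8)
The plan is to derive Theorem \ref{thm3} from the structural result in Theorem \ref{thm1} applied to $F=(t+1)K_2$. First I would check that $(t+1)K_2$ meets the hypotheses: it is a forest, hence outerplanar; it embeds in $K_1\vee P_{n-1}$ once $n-1\ge 2(t+1)$, since the path alone contains a matching of size $t+1$; and it does \emph{not} embed in $K_1\vee((t-1)K_2\cup(n-2t+1)K_1)$, because the latter has matching number exactly $t$ (the $t-1$ disjoint edges together with one edge at the apex), whereas $(t+1)K_2$ has matching number $t+1$. Thus for large $n$ the connected extremal graph $G$ splits as $G=K_1\vee H$ with apex $u$ satisfying $x_u=1$ and $N_G(u)=V(G)\setminus\{u\}$, and $H:=G[N_G(u)]$ is a disjoint union of paths on $n-1$ vertices. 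I would also record at the outset that one may assume $G$ connected: a disconnected outerplanar $(t+1)K_2$-free graph has spectral radius equal to that of one of its components, which lies on fewer vertices and is dominated in spectral radius by $M_t$.

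Next I would translate the forbidden-subgraph condition into a constraint on $H$. Since $n$ is large, any maximum matching of $H$ leaves unmatched vertices, so $u$ can be matched to one of them and the matching number of $G$ equals that of $H$ plus one. Hence $(t+1)K_2$-freeness is equivalent to $\nu(H)\le t-1$, where $\nu$ denotes the matching number. Writing $H=\bigcup_i P_{a_i}$ (isolated vertices being the $P_1$'s), this reads $\sum_i\lfloor a_i/2\rfloor\le t-1$. For $t=1$ this forces $H=(n-1)P_1$ and $G=K_{1,n-1}$, giving part (i) immediately; so assume $t\ge 2$.

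The analytic heart is an identity obtained from the eigenequations. Summing $\rho x_v=1+\sum_{w\in N_H(v)}x_w$ over $v\in V(H)$ and using $\rho=\sum_{v\in V(H)}x_v$ gives
\[
\rho^2-2\rho=(n-1)-E,\qquad E:=\sum_{w\in V(H)}\bigl(2-d_H(w)\bigr)x_w\ \ge 0,
\]
equivalently $\rho=1+\sqrt{n-E}$, where $E$ weights each isolated vertex of $H$ twice and each path-endpoint once (interior path-vertices not at all). Maximizing $\rho$ is therefore tantamount to minimizing the ``endpoint charge'' $E$, and I would carry out this minimization through two monotone transformations, each keeping $G$ outerplanar and $(t+1)K_2$-free while strictly raising $\rho$. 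The first removes inefficiency: while $H$ has fewer than $2(t-1)$ edges --- equivalently $\nu(H)<t-1$ or $H$ has an even path --- I add an edge (join two isolated vertices when there is matching budget, or append an isolated vertex to an even path, which raises the edge count without raising $\nu$), strictly increasing $\rho$ by Perron--Frobenius; this drives $H$ to a union of odd paths with $\nu(H)=t-1$ and exactly $2(t-1)$ edges. The second consolidates: given two nontrivial odd components $P_{2i+1}$ and $P_{2j+1}$ with $i\ge j$, I delete an end-edge of the shorter one and reattach the freed sub-path to an endpoint of the longer one, producing $P_{2(i+j)+1}\cup P_1$ with the same numbers of vertices, edges and matching. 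Iterating collapses $H$ to a single $P_{2t-1}$ together with isolated vertices, i.e.\ $G\cong K_1\vee H_{\mathcal{OP}}(2t-1,1)=M_t$, which is part (ii); the strict monotonicity along the way also yields uniqueness.

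The step I expect to be the main obstacle is verifying that consolidation strictly increases $\rho$. Via the Rayleigh quotient with the Perron vector $X$ of the pre-swap graph, the gain equals $2x_{d_2}\bigl(x_{c}-x_{d_1}\bigr)/\langle X,X\rangle$, where $c$ is the chosen endpoint of the longer path, $d_1$ the detached (now isolated) endpoint of the shorter one, and $d_2$ its former neighbour; this is where I must prove the comparison $x_{c}\ge x_{d_1}$. Because the components of $H$ are coupled only through $u$ with the uniform source $x_u=1$, the endpoint value of a path component is a function of its length alone at the common $\rho$, and the key lemma is that this function is monotonically increasing in the length, which I would establish from the explicit solution $x_i=\tfrac{1}{\rho-2}+A\mu_+^{\,i}+B\mu_-^{\,i}$ of the path recurrence, $\mu_\pm=\tfrac{\rho\pm\sqrt{\rho^2-4}}{2}$, subject to the endpoint conditions. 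Granting this, $x_c\ge x_{d_1}$ makes the Rayleigh gain nonnegative, and strictness is automatic: equality would force $X$ to be the Perron vector of the new (connected) graph as well, contradicting the eigenequation at the now-isolated vertex $d_1$. The remaining bookkeeping --- that the lower bound on $n$ guarantees enough isolated vertices for the edge-additions, and that every intermediate graph stays outerplanar (each block being a fan $K_1\vee P_a$) --- is routine.
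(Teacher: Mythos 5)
Your connected-case analysis is correct and takes a genuinely different route from the paper's. The paper reduces, via Theorem \ref{thm1}, to $G=K_1\vee G[A]$ with $G[A]$ a union of paths having at most $2t-2$ edges, and then invokes its Lemma \ref{lm1} (the $(s_1,s_2)$-transformation lemma, proved by inductive ``box'' estimates on eigenvector differences) repeatedly, which is why the threshold $6.5025\times 2^{2t+1}$ appears. You instead phrase the constraint as $\nu(H)\le t-1$, add edges greedily, and then merge two odd paths in a single consolidation move whose Rayleigh gain rests on the lemma that, at fixed $\rho>2$ and apex value $1$, the Perron entry at a path endpoint is strictly increasing in the path length. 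That lemma is true and your proposed proof works: writing $\mu_\pm$ for the roots of $\mu^2-\rho\mu+1=0$ and $f(s)=\mu_+^s-\mu_-^s$, the endpoint value of a component $P_s$ equals $\frac{1}{\rho-2}\bigl(1-\frac{f(1)+f(s)}{f(s+1)}\bigr)$, and monotonicity reduces to $f(s+2)-f(s+1)>f(1)$, i.e.\ to $(\mu_+-1)(\mu_+^{s+1}-1)\bigl(1-\mu_+^{-(s+2)}\bigr)>0$. A bonus of your route is that it needs no exponential-in-$t$ threshold for this step, only $\rho>2$. One caution: your identity $\rho=1+\sqrt{n-E}$ does not by itself make maximizing $\rho$ ``tantamount'' to minimizing $E$, since $E$ is computed from each graph's own Perron vector; but your actual argument never uses this, only the two Rayleigh-quotient moves, so nothing is lost.

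The genuine gap is the disconnected case, which you dispose of in one sentence: a component ``lies on fewer vertices and is dominated in spectral radius by $M_t$.'' That domination is exactly what must be proved, and it is not routine at the stated value of $n$. If $G_1$ is the component carrying $\rho(G)$ and $|V(G_1)|=n_1$, then for $n_1\ge N$ one can argue as you intend (by the connected case at order $n_1$, $G_1$ is contained in a proper subgraph of $M_t$), but for $n_1<N$ the structural theorem is unavailable, and the crude outerplanar bound $\rho(G_1)\le\sqrt{2e(G_1)}\le 2\sqrt{n_1}$ need not be below $\rho(M_t)\ge\sqrt{n-1}$: the hypothesis only guarantees $n\ge N+\frac32+3\sqrt{N-\frac74}$, which is far smaller than $4N$. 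The paper's Case \ref{case3.2} fills precisely this hole: it first shows the other components must all be isolated vertices and $G_1$ must have matching number $t$ (otherwise joining the rest to one vertex of $G_1$ strictly increases $\rho$), and then, for $n_1<N$, invokes the Shu--Hong bound of Lemma \ref{lm2}, $\rho(G_1)\le\frac32+\sqrt{n_1-\frac74}<\frac32+\sqrt{N-\frac74}\le\sqrt{n-1}$ --- this inequality is exactly where the theorem's threshold $n\ge N+\frac32+3\sqrt{N-\frac74}$ comes from. Without Lemma \ref{lm2} (or some substitute sharp enough in the constant), your reduction to the connected case does not go through, so this step needs to be argued, not asserted.
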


\vspace*{2mm}
Surprisingly, inspired by the above results, we consider whether we can extend the Brualdi-Solheid problem for $B_{tl}$-free graphs on the outerplanar graphs back to on the planar graphs.
\begin{problem}\label{p2}
Which planar graphs have the $\emph{spex}_{\mathcal{P}}(n,B_{tl})$?
\end{problem}

\begin{definition}
For three positive integers $n,n_1,n_2$ with $n\geq n_1>n_2 \geq 1$. Define $H(n_1, n_2)$ as follows:
\[  
H_{\mathcal{P}}(n_1, n_2) = \begin{cases}   
P_{n_1} \cup \frac{n-2-n_1}{n_2} P_{n_2}, & \text{if } n_2|(n-2-n_1); \\  
P_{n_1} \cup \left\lfloor \frac{n-2-n_1}{n_2} \right\rfloor P_{n_2} \cup P_{n-2-n_1-\left\lfloor \frac{n-2-n_1}{n_2} \right\rfloor n_2}, & \text{otherwise.}  
\end{cases}  
\]
\end{definition}

\begin{remark}\label{rk111}
There are some known results for $\textit{spex}_\mathcal{P}(n,B_{tl})$ and related extremal graphs.
\begin{enumerate}[label=(\Roman*)]  
    \item If $t = 1$, then $B_{tl} \cong C_{l}$. 
     \begin{enumerate}[label=(\alph*)]
         \item If $l = 3$, then we state that SPEX$_{\mathcal{P}}(n, B_{tl})$ = SPEX$_{\mathcal{P}}(n, C_{3}) = K_{2, n-2}$. Clearly, we deduce that $G \cong K_{2, n-2}$ since $G$ is $C_3$-free; otherwise, adding any edge would form a $C_3$, leading to a contradiction.
         \item If $l = 4$, then we assert that SPEX$_{\mathcal{P}}(n, B_{tl})$ = SPEX$_{\mathcal{P}}(n, C_{4}) = J_n$, where $J_n$ is derived from $K_1 \vee (n-1)K_1$ by embedding a maximum matching within its independent set. Nikiforov \cite{V. Nikiforov-2} and Zhai and Wang \cite{M.Q. Zhai-1} independently proved that $J_n$ is the extremal graph for $\textit{spex}(n, C_4)$ for both odd and even $n$. Noting that $J_n$ is planar, thus implying $G \cong J_n$.
         \item For $l \geq 5$, Fang, Lin, and Shi \cite{L.F. Fang-2} established that SPEX$_{\mathcal{P}}(n, B_{tl})$ = SPEX$_{\mathcal{P}}(n, C_{l}) = K_{2} \vee H_{\mathcal{P}}\left(\lfloor\frac{l-3}{2}\rfloor, \lceil\frac{l-3}{2}\rceil\right)$.
\end{enumerate} 
    \item If $t = 2$ and $l \geq 3$, then $B_{tl} \cong C_{l,l}$. Zhang and Wang \cite{H.R. Zhang} showed that SPEX$_{\mathcal{P}}(n, C_{l,l}) = K_2 \vee H_{\mathcal{P}}(l-2, l-2)$.  
    \item If $t \geq 3$ and $l = 3$, then $B_{tl} \cong F_{t}$. Wang, Huang and Lin \cite{X.L. Wang} determined that SPEX$_{\mathcal{P}}(n, F_{t}) = K_2 \vee H_{\mathcal{P}}(2t-3, 1)$.
\end{enumerate}
\end{remark}

\begin{thm}\label{thm4}
For integers $t\geqslant3$, $l\geqslant3$, $n\geqslant max\{2.67\times9^{17},10.2\times2^{(t-1)(l-1)+l-3}+2,
\frac{625}{32} \lfloor\frac{l-3}{2}\rfloor^2 + 2\}$,
then \emph{SPEX}$_{\mathcal{P}}(n,B_{tl}) = K_2\vee H_{\mathcal{P}}(tl-t-l,l-2)$.
\end{thm}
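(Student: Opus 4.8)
The plan is to follow the now-standard two-phase strategy for planar spectral extremal problems: first pin down the global shape of any extremizer $G$, then solve a purely combinatorial-spectral optimization over that restricted family. For the first phase I would establish the planar analogue of Theorem~\ref{thm1}: for $n$ large, every connected $G\in\mathrm{SPEX}_{\mathcal P}(n,B_{tl})$ satisfies $G=K_2\vee H$, where two apex vertices $u_1,u_2$ are adjacent to all other vertices, carry eigenvector entries tending to $1$, and $H=G[V(G)\setminus\{u_1,u_2\}]$ is a linear forest (a disjoint union of paths). This rests on the Tait--Tobin key structure \cite{M. Tait} together with the lower bound $\rho(G)\ge\rho\bigl(K_2\vee H_{\mathcal P}(tl-t-l,l-2)\bigr)\ge\tfrac12\bigl(1+\sqrt{8n-15}\bigr)=\sqrt{2n}-O(1)$, obtained by exhibiting the candidate graph and comparing with its spanning subgraph $K_2\vee(n-2)K_1$. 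The lower bound forces two vertices of near-maximal degree and eigenvector weight and, after bootstrapping, full domination; once $u_1,u_2$ dominate, planarity forces $H$ to have maximum degree at most $2$ and to be acyclic (otherwise $\{u_1,u_2,v\}$ against three neighbors of $v$ gives $K_{3,3}$, and $u_1,u_2$ joined to a cycle gives a $K_5$-minor), hence a linear forest. This phase mirrors \cite{L.F. Fang-2} and carries most of the technical weight.

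Once $G=K_2\vee H$ with $H$ a linear forest, I would prove the clean combinatorial characterization that drives everything: \emph{for $t\ge 3$, $G$ is $B_{tl}$-free if and only if} $f(H):=\sum_i\lfloor m_i/(l-1)\rfloor\le t-2$, where the $m_i$ are the orders of the path components of $H$. The role of the hypothesis $t\ge 3$ is that each vertex of $H$ has degree at most $4<2t$, so no vertex of $H$ can be the common vertex of $t$ edge-disjoint $l$-cycles; hence the common vertex of any putative $B_{tl}$ must be $u_1$ or $u_2$. Fixing the common vertex, say $u_1$, the $t$ cycles are pairwise disjoint off $u_1$, so at most one of them can contain $u_2$; every other cycle avoids $u_2$ and therefore consists of $u_1$ together with a subpath on $l-1$ consecutive vertices of $H$. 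Thus such a family has size at most $f(H)+1$, giving $B_{tl}$-freeness when $f(H)\le t-2$; conversely, when $f(H)\ge t-1$ the remaining $n-2-(t-1)(l-1)$ vertices (huge for large $n$) contain a path of order $\ge l-2$ realizing the extra $u_2$-cycle, so a $B_{tl}$ appears. This yields the stated equivalence.

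The final phase is to maximize $\rho(K_2\vee H)$ over linear forests $H$ on $n-2$ vertices subject to $f(H)\le t-2$, and to show the unique optimizer is $H_{\mathcal P}(tl-t-l,l-2)$. I would use two monotonicity lemmas of the familiar ``move mass to increase $\rho$'' type, each proved via the Rayleigh quotient and the Perron eigenvector. First, a \emph{concentration} lemma: among linear forests of fixed order and fixed budget $f$, merging the length that consumes the budget into a single path strictly increases $\rho(K_2\vee\cdot)$; this forces one long path carrying the whole budget, whose largest admissible order is exactly $(t-1)(l-1)-1=tl-t-l$, since one more vertex raises $f$ to $t-1$. Second, a \emph{cap-filling} lemma: the remaining $n-2-(tl-t-l)$ vertices must lie in components of order at most $l-2$ (to keep $f$ fixed), and among these it is strictly best to use paths as long as the cap allows, namely copies of $P_{l-2}$ with a single shorter remainder, because merging two short paths into one longer path of order $\le l-2$ adds an edge and raises $\rho$. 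Together these force $H=H_{\mathcal P}(tl-t-l,l-2)$, and the strictness gives uniqueness.

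The main obstacle I anticipate is not the combinatorial equivalence of phase two, which is short, but the simultaneous quantitative control demanded by phases one and three: the structural theorem needs spectral estimates sharp enough that the two apices genuinely dominate and the residual is \emph{exactly} a linear forest, while the optimization needs the concentration and cap-filling comparisons to hold with \emph{strict} inequality for uniqueness. Both hinge on eigenvector-entry estimates on the components of $H$ that only become valid once $n$ exceeds the explicit thresholds in the statement, so the delicate part is making the error terms in the expansion $\rho(K_2\vee H)=\sqrt{2n}+c(H)+O(n^{-1/2})$ precise enough to separate the near-optimal competitors and to certify that $tl-t-l$ is the exact cut-off rather than an off-by-one neighbor.
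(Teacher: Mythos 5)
Your proposal breaks at Phase 2: the claimed equivalence ``for $t\ge 3$, $K_2\vee H$ is $B_{tl}$-free if and only if $f(H):=\sum_i\lfloor m_i/(l-1)\rfloor\le t-2$'' is false for $l\ge 5$, and this is precisely where the hardest part of the actual proof lives. Your converse argument assumes that when $f(H)\ge t-1$ the many remaining vertices of $H$ ``contain a path of order $\ge l-2$,'' but those vertices need not induce any long path. Concretely, take $H=P_{(t-1)(l-1)}\cup\bigl(n-2-(t-1)(l-1)\bigr)K_1$ with $l\ge5$: here $f(H)=t-1$, the $t-1$ disjoint $(l-1)$-subpaths exhaust the long path, and the one extra cycle through $u_2$ would need $l-2\ge 3$ vertices of $H$ split into at most two segments (one on each side between $u_1$ and $u_2$), which isolated vertices cannot supply. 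So $K_2\vee H$ is $B_{tl}$-free despite $f(H)=t-1$. The same holds for the genuinely dangerous competitors $K_2\vee H_{\mathcal{P}}\bigl((t-1)(l-1)+l-3-x,\,x\bigr)$ with $x\le\lfloor\frac{l-3}{2}\rfloor$. The paper's correct characterization (its Claim 4.3) is that, in the presence of $t-1$ disjoint $(l-1)$-subpaths, $B_{tl}$-freeness is equivalent to $\bar n_1+n_2\le l-3$ or $n_2+n_3\le l-3$, where $\bar n_1=n_1-(t-1)(l-1)$; your condition only matches this for $l=3,4$ (where a cycle through both apexes needs at most two extra vertices). Consequently your Phase 3 optimizes over a feasible set that is too small: it never compares the candidate $K_2\vee H_{\mathcal{P}}(tl-t-l,l-2)$ against these $B_{tl}$-free graphs with $f(H)=t-1$. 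Ruling those out is the paper's Case 2, which requires a separate rewiring argument (delete one well-chosen edge of the long path and redistribute the shortest component onto the $P_{n_2}$'s, using the eigenvector bounds $x_u\in[\frac{2}{\rho},\frac{2}{\rho}+\frac{4.496}{\rho^2}]$) to show any such competitor has strictly smaller spectral radius. Without this, the theorem is not proved.

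A secondary, fixable flaw is in Phase 1: with two \emph{non-adjacent} dominating apexes, $H$ being a cycle does not create a $K_5$-minor --- $\overline{K_2}\vee C_{n-2}$ is the planar bipyramid (the octahedron when $n=6$), so planarity alone does not force $H$ to be a linear forest. You must either invoke $B_{tl}$-freeness directly (for large $n$, $K_1\vee C_{n-2}$ already contains $B_{tl}$), or, as the paper does in its Claim 4.1, first prove $u_1u_2\in E(G)$ by a spectral exchange (add $u_1u_2$, delete the edges of the at most $t-1$ disjoint $(l-1)$-subpaths to preserve $B_{tl}$-freeness, and verify the Rayleigh quotient increases); adjacency of the apexes is also needed later, since the asserted extremal graph contains that edge and the freeness analysis depends on it.
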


\vspace*{2mm}
An outline of the paper is as follows.
In section \ref{sec2}, we prove the Theorem \ref{thm1}, which will be frequently used in the following.
In Section \ref{sc3}, we give the proofs of Theorems \ref{thm2} and \ref{thm3} respectively.
In Section \ref{sc4}, we further extend the results to planar graph, determining the extremal graph with $\textit{spex}_{\mathcal{P}}(n,B_{tl})$ by proofing Theorem \ref{thm4}.

\section{Proof of theorem \ref{thm1}}\label{sec2}
If $G$ is a connected graph with order $n$, by Perron-Frobenius theorem, then there exists a positive eigenvector $X=(x_1,...,x_n)^T$ corresponding to $\rho(G)$.
Now let $G$ be an extremal graph to $\textit{spex}_\mathcal{OP}(n,F)$, and $\rho$ denote this spectral radius.
For convenience, we now normalize $X$ such that its maximum entry is 1, and then choose $u^\prime\in V(G)$ such that $x_{u^{\prime}}=\max\{x_i|i=1,2,...,n\}=1$.
For two disjoint subset $S,T\subset V(G)$, denote by $G[S,T]$ the bipartite subgraph of $G$ with vertex set $S\cup T$ that consist of all edges with one endpoint in $S$ and the other endpoint in $T.$
Set $e(S)=|E(G[S])|$ and $e(S,T)=|E\left(G[S,T]\right)|.$  Since $G$ is an outerplanar graph, we have
$e(S)\leq2|S|-3$.

Now, we are ready to give a proof of Theorem \ref{thm1}.
\begin{proof}[\textbf{proof of Theorem \ref{thm1}}]
We present the proof in a sequence of claims and start it by giving a lower bound of $\rho$.
\begin{claim}\label{claim1.1}
 $\rho \geq \sqrt {n}+ 1- \frac {n-t}{n- \sqrt {n}} > \frac{4}{5}\sqrt{n}.$
\end{claim}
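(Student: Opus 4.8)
The plan is to exploit the extremality of $G$: since $G$ maximizes the spectral radius over all $F$-free outerplanar graphs, it suffices to exhibit one $F$-free outerplanar graph whose spectral radius already meets the claimed bound. The natural candidate is $H:=K_1\vee\big((t-1)K_2\cup(n-2t+1)K_1\big)$ itself. Indeed $H$ is outerplanar (it is a union of $t-1$ triangles and $n-2t+1$ pendant edges, all sharing the single apex coming from the $K_1$), and it is $F$-free precisely because, by hypothesis, $F\not\subseteq K_1\vee\big((t-1)K_2\cup(n-2t+1)K_1\big)$. Hence $H$ is admissible for $\textit{spex}_{\mathcal{OP}}(n,F)$ and $\rho\ge\rho(H)$, so the whole claim reduces to proving $\rho(H)\ge\sqrt n+1-\frac{n-t}{n-\sqrt n}$.

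To get $\rho(H)$ exactly I would use the equitable partition $V(H)=\{u\}\cup M\cup I$, where $u$ is the apex, $M$ is the set of $2t-2$ vertices carried by the $(t-1)K_2$, and $I$ is the set of $n-2t+1$ pendant vertices. Each cell meets the others a constant number of times, so the partition is equitable with quotient matrix
\[
B=\begin{pmatrix} 0 & 2t-2 & n-2t+1\\ 1 & 1 & 0\\ 1 & 0 & 0\end{pmatrix}.
\]
Since $H$ is connected, $\rho(H)$ equals the largest eigenvalue of $B$, i.e. the largest root of $f(\lambda)=\lambda^3-\lambda^2-(n-1)\lambda+(n-2t+1)$.

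Write $\rho_0:=\sqrt n+1-\frac{n-t}{n-\sqrt n}=\sqrt n+\varepsilon$ with $\varepsilon:=\frac{t-\sqrt n}{n-\sqrt n}$. To pass from the sign of $f$ to the inequality $\rho(H)\ge\rho_0$ I first check that $\rho_0$ lies to the right of the second-largest root of $f$; this is cheap, since $f(0)=n-2t+1>0$ and $f(1)=2-2t\le 0$ trap that root inside $(0,1]$, while $\rho_0>1$ for $n$ large. It then suffices to verify $f(\rho_0)\le 0$. Substituting $\lambda=\sqrt n+\varepsilon$ and using the identity $(n-\sqrt n)\varepsilon=t-\sqrt n$, the cubic telescopes and collapses to the compact expression $f(\rho_0)=-(\sqrt n-1)+\varepsilon+(3\sqrt n-1)\varepsilon^2+\varepsilon^3$. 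Because $1\le t\le\frac{n-1}{2}$ forces $\varepsilon<\tfrac12$, the three $\varepsilon$-terms are dominated by the leading $-(\sqrt n-1)$ once $n$ is large, giving $f(\rho_0)\le0$ and hence $\rho(H)\ge\rho_0$. The second inequality of the claim is then immediate: since $t\ge1$ we have $\rho_0\ge\sqrt n+\frac{1-\sqrt n}{n-\sqrt n}=\sqrt n-\frac1{\sqrt n}$, which exceeds $\tfrac45\sqrt n$ throughout the stated range of $n$.

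I expect the only genuine obstacle to be the verification that $f(\rho_0)\le0$: although the substitution cancels nicely, one must control the sign of the surviving $\varepsilon$-terms uniformly over $1\le t\le\frac{n-1}{2}$, and it is exactly here that the upper bound on $t$ (hence $\varepsilon<\tfrac12$) and the largeness of $n$ are consumed. Everything else — the outerplanarity and $F$-freeness of $H$, the equitable-partition computation, and the final comparison with $\tfrac45\sqrt n$ — is routine.
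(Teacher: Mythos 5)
Your proof is correct, but its computational core takes a genuinely different route from the paper's. Both arguments share the same extremality skeleton --- exhibit the $F$-free outerplanar graph $H=K_1\vee\bigl((t-1)K_2\cup(n-2t+1)K_1\bigr)$ and conclude $\rho\ge\rho(H)$ --- but the paper never touches the characteristic polynomial of $H$: it starts from the wheel $W=K_1\vee C_{n-1}$, whose Perron data are computed exactly by symmetry ($\rho(W)=1+\sqrt n$ and $y_2^2=\frac{1}{2(n-\sqrt n)}$), notes that $H$ is obtained from $W$ by deleting the $n-t$ cycle edges, and plugs the wheel's Perron vector $Y$ into the Rayleigh quotient of $A(H)$ to get $\rho(H)\ge(1+\sqrt n)-2(n-t)y_2^2$, which is literally the expression in the claim. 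You instead compute $\rho(H)$ exactly as the largest root of the cubic $f(\lambda)=\lambda^3-\lambda^2-(n-1)\lambda+(n-2t+1)$ coming from the equitable partition (your quotient matrix, the polynomial, and the telescoped value $f(\rho_0)=-(\sqrt n-1)+\varepsilon+(3\sqrt n-1)\varepsilon^2+\varepsilon^3$ all check out), and then run a sign analysis. Two small remarks on your version: first, since $\varepsilon=\frac{t-\sqrt n}{n-\sqrt n}$ is negative whenever $t<\sqrt n$, you should control $|\varepsilon|$ rather than $\varepsilon$; the two-sided bound $-\frac{1}{\sqrt n}\le\varepsilon<\frac12$ follows from $1\le t\le\frac{n-1}{2}$, and with it $f(\rho_0)\le-\frac{\sqrt n}{4}+\frac{11}{8}<0$ already for modest $n$, far below the paper's threshold. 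Second, your preliminary trapping of the middle root in $(0,1]$ is superfluous: a monic cubic is strictly positive to the right of its largest root, so $f(\rho_0)\le 0$ alone forces $\rho_0\le\rho(H)$. As for what each approach buys: the paper's Rayleigh argument is shorter, needs no equitable-partition machinery, and explains where the odd-looking quantity $\sqrt n+1-\frac{n-t}{n-\sqrt n}$ comes from (it is exactly the Rayleigh value at the wheel's Perron vector); yours is self-contained, pins down $\rho(H)$ exactly rather than merely bounding it, and shows the claimed bound is a strict underestimate of $\rho(H)$, at the price of a more delicate algebraic verification.
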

\begin{proof}

Consider a wheel graph $W = K_1 \vee C_{n-1}$. Suppose that $Y = (y_1, y_2, \ldots, y_n)^T$ is the normalized Perron vector of $W$, where $y_1$ corresponds to the vertex of degree $n-1.$
By symmetry, we have $y_2 = y_3 = \cdots = y_n$. This yields the following equations:
$$
\rho(W) y_1 = (n-1) y_2, \quad \rho(W) y_2 = y_1 + 2y_2, \quad \text{and} \quad y_1^2 + (n-1)y_2^2 = 1.
$$
Solving this system of equations, we find that 
$$\rho(W) = 1 + \sqrt{n} \quad \text{and} \quad y_2^2 = \frac{1}{2(n - \sqrt{n})}.$$
Given that $\frac{n-1}{2} \leq t$, a new graph $W' \cong K_{1} \vee ((t-1)K_2 \cup (n-2t+1)K_1)$ can be obtained from $W$ by deleting $n-t$ edges in $E(C_{n-1})$.
Applying the Rayleigh Principle, we have
$$
\rho(W') \geq Y^T A(W') Y = Y^T A(W) Y - 2(n-t)y_2^2 = (1 + \sqrt{n}) - \frac{n-t}{n - \sqrt{n}}.
$$
Clearly, $W' \cong K_{1} \vee ((t-1)K_2 \cup (n-2t+1)K_1)$ is an outerplanar graph and $F$-free.
Therefore,
$$
\rho(G) \geq \rho(W') \geq \sqrt{n} + 1 - \frac{n-t}{n - \sqrt{n}} \geq \sqrt{n} + 1 - \frac{n-1}{n - \sqrt{n}} = \sqrt{n} - \frac{1}{\sqrt{n}} > \frac{4}{5}\sqrt{n},
$$
where the last inequality holds for $n > 5$, as desired.
\end{proof}

\begin{claim}\label{claim1.2}
For every $u \in V(G)$, we have $d_{G}(u)\geq nx_u-15\sqrt{n}$.
\end{claim}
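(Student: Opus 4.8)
The plan is to extract the degree $d_G(u)$ from the second-order eigenvalue equation at $u$ and to control the resulting second-neighbourhood sum using only the outerplanarity of $G$ together with the lower bound on $\rho$ from Claim \ref{claim1.1}. Writing $A=A(G)$, the starting point is the exact identity obtained from $\rho^2 x_u=(A^2X)_u$, namely
$$\rho^2 x_u = d_G(u)\,x_u + \sum_{w\neq u}|N_G(u)\cap N_G(w)|\,x_w,$$
where the diagonal term $d_G(u)x_u$ counts the closed walks $u\to v\to u$ and the remaining sum counts the length-two walks leaving $u$. The degree we want sits in the first term, so the whole argument reduces to showing that the second term is $O(\sqrt n)$.

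First I would bound the second-neighbourhood sum. Since $G$ is outerplanar it contains no $K_{2,3}$, so any two vertices have at most two common neighbours; hence $|N_G(u)\cap N_G(w)|\le 2$ for every $w\neq u$, and the sum is at most $2\sum_{w}x_w$. To bound $\sum_w x_w$ I would sum the first eigenvalue equation over all vertices: $\rho\sum_w x_w=\sum_v d_G(v)x_v\le\sum_v d_G(v)=2e(G)\le 2(2n-3)<4n$, where the middle inequality uses $x_v\le 1$ and the last uses the outerplanar edge bound $e(G)\le 2n-3$. Combined with $\rho>\tfrac45\sqrt n$ from Claim \ref{claim1.1}, this gives $\sum_w x_w<5\sqrt n$, and therefore $\sum_{w\neq u}|N_G(u)\cap N_G(w)|\,x_w\le 10\sqrt n$. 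Substituting back yields $d_G(u)x_u\ge \rho^2 x_u-10\sqrt n$.

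It remains to convert this into the stated bound. Here I would use the sharp form $\rho\ge\sqrt n-\tfrac1{\sqrt n}$ proved inside Claim \ref{claim1.1}, which gives $\rho^2\ge n-2$ and hence $(n-\rho^2)x_u\le 2$. Thus $x_u(n-d_G(u))=nx_u-d_G(u)x_u\le(n-\rho^2)x_u+10\sqrt n\le 2+10\sqrt n\le 11\sqrt n$ for $n\ge 4$. The final step avoids dividing by the possibly small quantity $x_u$ by invoking the identity $nx_u-d_G(u)=x_u(n-d_G(u))-d_G(u)(1-x_u)$; since $d_G(u)\ge 0$ and $x_u\le 1$ the subtracted term is nonnegative, so $nx_u-d_G(u)\le x_u(n-d_G(u))\le 11\sqrt n\le 15\sqrt n$, which is precisely $d_G(u)\ge nx_u-15\sqrt n$.

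The main obstacle is obtaining the coefficient $n$ (rather than merely $\sqrt n$) in front of $x_u$: the naive first-order bound $d_G(u)\ge\rho x_u$ only produces $\tfrac45\sqrt n\,x_u$, so the second-order equation is essential. The two delicate points are (i) that the crude estimate $\sum_{v\in N_G(u)}d_G(v)\le 4n-6$ is far too lossy and must be replaced by the weighted bound through $\sum_w x_w=O(\sqrt n)$, which is exactly where the outerplanar sparsity and $K_{2,3}$-freeness both enter, and (ii) that one must use the precise lower bound $\rho^2\ge n-2$ rather than the weaker $\rho^2\ge\tfrac{16}{25}n$. The slack between the constant $11$ obtained above and the stated $15$ leaves comfortable room for looser intermediate estimates.
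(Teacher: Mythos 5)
Your proof is correct and takes essentially the same approach as the paper: both expand the second-order eigenvalue equation $\rho^2 x_u=(A^2X)_u$, use the $K_{2,3}$-freeness of outerplanar graphs to bound the second-neighborhood contributions by $2\sum_w x_w$, and turn this into $O(\sqrt{n})$ via $e(G)\le 2n-3$ and Claim \ref{claim1.1}. The only differences are bookkeeping: the paper splits the walk sum over $N_G(u)$ and its complement (bounding the part inside $N_G(u)$ by $2\rho x_u$) and discharges the diagonal term immediately via $d_G(u)x_u\le d_G(u)$, whereas you bound all off-diagonal terms uniformly through the global sum $\sum_w x_w\le 2e(G)/\rho$ and recover $d_G(u)$ from $d_G(u)x_u$ at the end using the sharp bound $\rho^2\ge n-2$ from Claim \ref{claim1.1} -- both routes are valid.
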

\begin{proof}
Let $A := N_{G}(u)$ and $B := V(G) \setminus (A \cup \{u\})$. Then
$$
\rho^2 x_u = \sum_{y \in N(u)} \sum_{z \in N(y)} x_z \leq d_G(u) + \sum_{y \in N(u)} \sum_{z \in N_A(y)} x_z + \sum_{y \in N(u)} \sum_{z \in N_B(y)} x_z.
$$
Since $G$ is an outerplanar graph, each vertex in $A \cup B$ has at most two neighbors in $A$ (otherwise, $G$ would contain a $K_{2,3}$, which is a contradiction). Specifically,
$$
\sum_{y \in N(u)} \sum_{z \in N_A(y)} x_z \leq 2 \sum_{y \in N(u)} x_y = 2\rho x_u.
$$
Similarly, each vertex in $B$ has at most two neighbors in $A$. Therefore,
$$
\sum_{y \in N(u)} \sum_{z \in N_B(y)} x_z \leq 2 \sum_{z \in B} x_z \leq \frac{2}{\rho} \sum_{z \in B} d_G(z) \leq \frac{4e(G)}{\rho} \leq \frac{4(2n-3)}{\rho} < 10\sqrt{n},
$$
where the second-to-last inequality holds because $e(G) \leq 2n-3$ by the outerplanarity of $G$, and the last inequality holds by Claim \ref{claim1.1}.
Combining the above inequalities, we obtain
$$
\rho^2 x_u < d_u - 2\rho x_u + 10\sqrt{n}.
$$
Using Claim \ref{claim1.1} again, then
\begin{align*}
d_{u} &> \left(\sqrt{n} + 1 - \frac{n-t}{n-\sqrt{n}}\right)^2 x_u - 2\left(\sqrt{n} + 1 - \frac{n-t}{n-\sqrt{n}}\right)x_u - 10\sqrt{n} \\
      &> \left(\sqrt{n} - \frac{n-t}{n-\sqrt{n}}\right)^2 x_u - 1 - 10\sqrt{n} \\
      &> (\sqrt{n} - 2)^2 x_u - 1 - 10\sqrt{n} \quad \text{(since $\frac{n-t}{n-\sqrt{n}} < 2$ for large $n$)} \\
      &> (n - 4\sqrt{n} + 4) x_u - 1 - 10\sqrt{n} \\
      &> nx_u - 15\sqrt{n},
\end{align*}
as desired.
\end{proof}

\begin{claim}\label{claim1.3}
$x_u \leq 31/\sqrt{n}$ for any vertex $u \in V(G)\setminus{u'}$.
\end{claim}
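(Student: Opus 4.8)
The plan is to bound $x_u$ through the two-step eigenvalue identity and the absence of $K_{2,3}$ in outerplanar graphs. First I would expand
\[
\rho^2 x_u = \sum_{y\in N_G(u)}\sum_{z\in N_G(y)} x_z = \sum_{z\in V(G)} |N_G(u)\cap N_G(z)|\, x_z,
\]
and isolate the term $z=u$, which contributes exactly $d_G(u)x_u$. For every $z\neq u$, outerplanarity forbids $K_{2,3}$, so $u$ and $z$ have at most two common neighbours, i.e. $|N_G(u)\cap N_G(z)|\le 2$. This already yields
\[
\rho^2 x_u \le d_G(u)x_u + 2\sum_{z\neq u} x_z .
\]

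Next I would control the tail sum. Since every entry of $X$ is at most $1$, the eigen-equation gives $x_z \le d_G(z)/\rho$, hence $\sum_{z} x_z \le \tfrac{1}{\rho}\sum_z d_G(z) = \tfrac{2e(G)}{\rho}$. Using $e(G)\le 2n-3$ together with Claim~\ref{claim1.1} this is below $5\sqrt{n}$, so that
\[
(\rho^2 - d_G(u))\,x_u \le 10\sqrt{n}.
\]
It then remains to show that the gap $\rho^2 - d_G(u)$ stays of order $n$ whenever $u\neq u'$, which reduces the whole claim to an upper bound on $d_G(u)$.

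The decisive step, and the one I expect to be the crux, is bounding $d_G(u)$ for $u\neq u'$. Applying Claim~\ref{claim1.2} to the dominant vertex $u'$ (with $x_{u'}=1$) gives $d_G(u')\ge n-15\sqrt{n}$. On the other hand, since $G$ has no $K_{2,3}$, the two distinct vertices $u$ and $u'$ share at most two common neighbours, so
\[
d_G(u)+d_G(u') = |N_G(u)\cup N_G(u')| + |N_G(u)\cap N_G(u')| \le n+2,
\]
forcing $d_G(u)\le 15\sqrt{n}+2$. Combining this with $\rho^2 > \tfrac{16}{25}n$ from Claim~\ref{claim1.1} makes $\rho^2-d_G(u) > \tfrac{16}{25}n - 15\sqrt{n}-2$, which for $n\ge 1.27\times 10^7$ is large enough that $x_u \le \tfrac{10\sqrt{n}}{\rho^2-d_G(u)} \le 31/\sqrt{n}$. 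The only genuine difficulty is recognising that the second-largest degree must be $O(\sqrt{n})$: this is exactly what $K_{2,3}$-freeness buys us once $u'$ is known to have near-maximal degree, and everything else is a routine estimate with the constant $31$ calibrated to absorb the lower-order terms.
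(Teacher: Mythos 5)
Your proof is correct and follows essentially the same route as the paper: the crux in both is that $K_{2,3}$-freeness together with $d_G(u')\geq n-15\sqrt{n}$ (Claim \ref{claim1.2} applied at $u'$) forces every vertex $u\neq u'$ to have degree $O(\sqrt{n})$ --- you obtain $d_G(u)\leq 15\sqrt{n}+2$ directly by inclusion-exclusion with $|N_G(u)\cap N_G(u')|\leq 2$, while the paper gets $d_G(u)<16\sqrt{n}$ by contradiction. The only real difference is in the final conversion: the paper simply cites Claim \ref{claim1.2} at $u$ (so $nx_u-15\sqrt{n}<d_G(u)<16\sqrt{n}$ gives $x_u<31/\sqrt{n}$ immediately), whereas you re-derive an equivalent two-step inequality $(\rho^2-d_G(u))x_u\leq 10\sqrt{n}$ from scratch --- valid, and with correct numerics, but redundant given that Claim \ref{claim1.2} is already available.
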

\begin{proof}

Considering any vertex $u \in V(G) \setminus \{u'\}$, we assert that $d_G(u) < 16\sqrt{n}$.
Otherwise, there is a vertex $u \in V(G)$ with $d_G(u) \geq 16\sqrt{n}$.
Note that $x_{u'} = 1$, then $d_G(u') \geq n - 15\sqrt{n}$ by Claim \ref{claim1.2}.
Combining the above degree inequalities and the Principle of Inclusion-Exclusion, we can observe  
\begin{align*}  
|N_G(u) \cap N_G(u')| &= |N_G(u)| + |N_G(u')| - |N_G(u) \cup N_G(u')| \\  
&\geq (n - 15\sqrt{n}) + 16\sqrt{n} - n \\  
&= \sqrt{n}.  
\end{align*}  
This implies that $u$ and $u'$ share at least $\sqrt{n}$ neighbors, which yields a $K_{2,3}$ as subgraph if $n \geq 9$,
contradicting that $G$ is an outerplanar graph.
Thus, by Claim \ref{claim1.3},
\[  
16\sqrt{n} > d_u > nx_u - 15\sqrt{n},  
\]  
that is, $x_u < \frac{31}{\sqrt{n}}$.
\end{proof}

Let $A'=N(u') $ and $B'= V( G) \setminus ( N(u') \cup \{u'\} )$ for convenience.

\begin{claim}\label{claim1.4}
$\sum_{z\in B'}x_z<3488/\sqrt{n}$
\end{claim}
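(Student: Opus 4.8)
The plan is to bound $\sum_{z\in B'}x_z$ by expanding it through the eigenequation and then controlling the two resulting pieces with outerplanarity together with the earlier claims. First I would write
\[
\rho\sum_{z\in B'}x_z=\sum_{z\in B'}\rho x_z=\sum_{z\in B'}\sum_{w\in N(z)}x_w .
\]
The crucial observation is that every $z\in B'$ is non-adjacent to $u'$ by the very definition of $B'$, so $u'\notin N(z)$ and hence $N(z)\subseteq A'\cup B'$. This lets me split each inner sum into an $A'$-part $\sum_{w\in N_{A'}(z)}x_w$ and a $B'$-part $\sum_{w\in N_{B'}(z)}x_w$, which I can estimate separately.

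Two structural facts drive the estimate. First, since $x_{u'}=1$, Claim \ref{claim1.2} gives $d_G(u')\ge n-15\sqrt n$, so $|A'|=d_G(u')\ge n-15\sqrt n$ and therefore $|B'|=n-1-|A'|<15\sqrt n$; moreover $G[B']$ is outerplanar, whence $e(G[B'])\le 2|B'|-3<30\sqrt n$. Second, because $u'$ is adjacent to every vertex of $A'$, any vertex with three neighbors in $A'$ would form a $K_{2,3}$ together with $u'$, contradicting outerplanarity; hence every $z$ has at most two neighbors in $A'$. Combining these with Claim \ref{claim1.3}, which bounds $x_w\le 31/\sqrt n$ for all $w\neq u'$ and in particular for $w\in A'$, the $A'$-part satisfies $\sum_{z\in B'}\sum_{w\in N_{A'}(z)}x_w\le |B'|\cdot 2\cdot\tfrac{31}{\sqrt n}<930$, while the $B'$-part equals $\sum_{ab\in E(G[B'])}(x_a+x_b)\le e(G[B'])\cdot 2\cdot\tfrac{31}{\sqrt n}<1860$.

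Putting these together yields $\rho\sum_{z\in B'}x_z<2790$, and dividing by $\rho>\tfrac45\sqrt n$ from Claim \ref{claim1.1} gives $\sum_{z\in B'}x_z<2790/(\tfrac45\sqrt n)=3487.5/\sqrt n<3488/\sqrt n$, as desired. I expect the main difficulty to be bookkeeping rather than conceptual: one must ensure that the large entry $u'$ never enters the estimates — it does not, precisely because vertices of $B'$ miss $u'$ — and the target constant $3488$ is essentially forced, leaving little slack. Consequently both the lower bound $\rho\ge\tfrac45\sqrt n$ and the outerplanar edge bound $e(G[S])\le 2|S|-3$ must be applied at full strength to land just below $3488$.
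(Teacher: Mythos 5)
Your proof is correct and follows essentially the same route as the paper's: both expand $\rho x_z$ via the eigenequation for $z\in B'$ (using that $u'\notin N(z)$ so every neighbor's entry is at most $31/\sqrt{n}$), bound $e(A',B')\le 2|B'|<30\sqrt{n}$ and $e(B')\le 2|B'|-3<30\sqrt{n}$ with $|B'|<15\sqrt{n}$, and divide the resulting bound $2790$ by $\rho>\tfrac{4}{5}\sqrt{n}$ to land at $3487.5/\sqrt{n}<3488/\sqrt{n}$. Your write-up merely makes explicit the point the paper leaves implicit, namely that the exclusion of $u'$ from $N(z)$ is what justifies applying Claim \ref{claim1.3} to all neighbors.
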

\begin{proof}
From the previous claim, we have $|B'| < 15\sqrt{n}$ and
\[  
\sum_{z \in B'} x_z = \frac{1}{\rho} \sum_{z \in B'} \rho x_z \leq \frac{1}{\rho} \sum_{z \in B'} \left( \frac{31}{\sqrt{n}} \right) d_z = \frac{31}{\rho \sqrt{n}} \left( e(A', B') + 2e(B') \right).  
\]  

Recall that the graph $G$ is outerplanar, then $d_{A'}(v) \leq 2$ for each vertex $v \in B'$, it implies $e(A', B') \leq 2|B'| < 30\sqrt{n}$.
Clearly, the graph $G[B']$ is also outerplanar, then $e(B') \leq 2|B'| - 3 < 30\sqrt{n}$.
Based on these bounds, the proof is completed by Claim \ref{claim1.1}.
\end{proof}

\begin{claim}\label{claim1.5}
$B'=\emptyset$.
\end{claim}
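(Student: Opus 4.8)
The plan is to argue by contradiction: assume $B' \neq \emptyset$, pick any vertex $w \in B'$, and produce a new outerplanar $F$-free graph $G'$ with $\rho(G') > \rho(G)$, contradicting the extremality of $G$. The graph $G'$ is obtained from $G$ by deleting all edges incident to $w$ and adding the single edge $wu'$, so that $w$ becomes a pendant neighbour of the dominant vertex $u'$.

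The first step is to bound $\rho x_w$. Since $A' = N(u')$, outerplanarity forbids a $K_{2,3}$, so $w$ and $u'$ have at most two common neighbours, i.e. $|N(w)\cap A'|\le 2$. Writing $\rho x_w = \sum_{y\in N(w)\cap A'} x_y + \sum_{y\in N(w)\cap B'} x_y$ and applying Claim \ref{claim1.3} (each $x_y\le 31/\sqrt n$) to the first sum and Claim \ref{claim1.4} ($\sum_{z\in B'} x_z < 3488/\sqrt n$) to the second, I obtain $\rho x_w < (62+3488)/\sqrt n = 3550/\sqrt n$. The hypothesis $n\ge 1.27\times 10^7$ is tailored precisely so that this quantity is below $1$, hence $\rho x_w < 1$.

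With this in hand the spectral comparison is routine: using the Perron vector $X$ of $G$ as a test vector for $G'$ together with the Rayleigh principle,
\[
\rho(G') \ge \frac{X^{T} A(G') X}{X^{T}X} = \rho(G) + \frac{2x_w(1-\rho x_w)}{X^{T}X} > \rho(G),
\]
since adding $wu'$ contributes $2x_{u'}x_w = 2x_w$ while deleting the edges at $w$ removes $2x_w\sum_{y\in N(w)} x_y = 2x_w\rho x_w$, and $\rho x_w < 1$. Outerplanarity of $G'$ is immediate: deleting edges preserves it, and attaching a single pendant vertex to $u'$ (which lies on the outer face of any outerplanar embedding) keeps the graph outerplanar.

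The step I expect to be the main obstacle is verifying that $G'$ remains $F$-free, because we have created a new edge at $u'$. I would argue that any copy of $F$ in $G'$ must use $w$, for otherwise it already lies in $G-w\subseteq G$. As $w$ has degree one in $G'$, it plays the role of a vertex $f\in V(F)$ of degree at most one whose unique neighbour, if any, is mapped to $u'$. Then $F-f$ embeds into $G'-w = G-w \subseteq G$ with that neighbour sent to $u'$; since $d_G(u')\ge n-15\sqrt n$ far exceeds $|V(F)|$, there is an unused neighbour $v\in N_G(u')$ to which $f$ can be mapped, yielding a copy of $F$ in $G$ and contradicting that $G$ is $F$-free. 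Hence $G'$ is outerplanar and $F$-free with $\rho(G')>\rho(G)$, contradicting the choice of $G$ and forcing $B'=\emptyset$.
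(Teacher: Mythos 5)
Your proof is correct and takes essentially the same approach as the paper: reattach $B'$-vertices as pendant neighbours of $u'$, check outerplanarity, verify $F$-freeness by swapping the reattached vertex for an unused neighbour of $u'$ (using $d_G(u')\ge n-15\sqrt n \gg |V(F)|$), and conclude via the Rayleigh quotient using $\sum_{z\in N(w)}x_z<3550/\sqrt n<1$. The only cosmetic difference is that you move a single vertex $w\in B'$ per step, whereas the paper moves all of $B'$ at once (and accordingly needs the slightly more involved count $|A'\setminus V(F')|>|V(F')\cap B'|$ to find distinct replacement vertices); both arguments are otherwise identical.
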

\begin{proof}
Otherwise, considering any vertex $y \in B'$ which is adjacent to at most two vertices in $A'$.
Combining Claims \ref{claim1.3} and \ref{claim1.4}, we can observe 
\begin{align*}  
\sum_{z \in N(y)} x_z &< \sum_{z \in B'} x_z + \frac{62}{\sqrt{n}}<\frac{3550}{\sqrt{n}}<1  
\end{align*}  
when $n \geq 1.27 \times 10^7$.
Let $G' = G + u' \sum_{y \in B'} y - \sum_{y \in B'} \sum_{z \in N(y)} zy$. Clearly, $G'$ is outerplanar. Next, we assert that $G'$ is $F$-free.
Suppose to the contrary that $G'$ contains a subgraph $F'$ isomorphic to $F$. Note that $G$ is $F$-free, but $G'$ contains a copy of $F$, then $V(F') \cap B'$ is not empty.
Furthermore, since $|A'| > n - 1 - 15\sqrt{n} > n - 16\sqrt{n} > |V(F)| = |V(F')|$ (as $n > [(64 + |V(F)|)^{\frac{1}{2}} + 8]^2$), it follows that
\[  
|A \backslash V(F')| = |A'| - |A' \cap V(F')| > |V(F')| - |V(F') \cap A'| \geq |V(F') \cap B'|.  
\]
Without loss of generality, let $V(F') \cap B' = \{v_1, \ldots, v_b\}$ and select $\{w_1, \ldots, w_b\} \subset A' \backslash V(F')$. Clearly, $N_{G'}(v_i) = \{u'\} \subseteq N_{G'}(w_i)$ for each $i \in \{1, \ldots, b\}$. This implies that a copy of $F$ is already present in $G$, a contradiction.
Therefore, $G'$ is $F$-free. However, by Rayleigh quotient, we obtain  
\begin{align*}  
\rho(A(G')) - \rho(A(G)) &\geq \frac{x^t(A(G') - A(G))x}{x^t x} \\  
&= \frac{2 \sum_{y \in B'} x_y}{x^t x} \left( 1 - \sum_{z \in N(y)} x_z \right) \\  
&> 0.  
\end{align*}
This contradicts the maximality of $G$. Thus, $B'= \emptyset.$
\end{proof}

Setting $u=u'$, $x_{u'}=1$ by Claim \ref{claim1.5}, the proof of Theorem \ref{thm1} (i) is well done, i.e., $G$ contains a copy of $K_{1.n-1}$.

\begin{claim}\label{claim1.6}
$G[A']$ is a union of disjoint induced paths.(In particular, we also view an isolated vertex in $G[A']$ as an induced path.)
\end{claim}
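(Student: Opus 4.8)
The plan is to use the conclusion of Claim~\ref{claim1.5} that $B'=\emptyset$, so that $A'=V(G)\setminus\{u'\}$ and the vertex $u'$ is adjacent to \emph{every} vertex of $A'$. Since a graph is a disjoint union of paths exactly when it is acyclic and has maximum degree at most $2$, I would verify these two properties of $G[A']$ separately.

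First I would bound the degrees inside $G[A']$. If some $w\in A'$ had three neighbours $z_1,z_2,z_3$ in $A'$, then because $z_1,z_2,z_3\in A'=N(u')$ each of them is adjacent to both $u'$ and $w$; hence $\{u',w\}$ together with $\{z_1,z_2,z_3\}$ spans a $K_{2,3}$, contradicting the outerplanarity of $G$. This is precisely the observation already exploited in Claim~\ref{claim1.2}, and it shows that every vertex of $G[A']$ has degree at most $2$.

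Next I would rule out cycles. Suppose $C$ is a cycle of length $k$ in $G[A']$, and set $S=V(C)\cup\{u'\}$, so that $|S|=k+1$. The cycle contributes $k$ edges, and since every vertex of $C$ lies in $A'=N(u')$, the $k$ spokes joining $u'$ to $V(C)$ give $k$ further edges; hence $e(S)\geq 2k$. But outerplanarity forces $e(S)\leq 2|S|-3=2k-1$, a contradiction. Therefore $G[A']$ is acyclic.

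Combining the two steps, $G[A']$ is an acyclic graph of maximum degree at most $2$, so each of its connected components is a path (an isolated vertex being viewed as a trivial path); as these components are induced subgraphs and carry no chords, they are induced paths, which is exactly the claim. I do not expect a genuine obstacle here: the whole content is recognising that the universal vertex $u'$ simultaneously forces the $K_{2,3}$-type degree restriction and the edge-count obstruction to cycles, and that these two constraints together characterise linear forests.
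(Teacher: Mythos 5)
Your proof is correct, and its first half (the degree bound via $K_{2,3}$) is exactly the paper's argument. Where you diverge is in ruling out cycles: the paper contracts a hypothetical cycle of $G[A']$ to a triangle and, together with the universal vertex $u'$, obtains a $K_4$-minor, contradicting outerplanarity; you instead count edges on $S=V(C)\cup\{u'\}$, getting $e(S)\geq 2k$ spokes-plus-cycle edges against the outerplanar bound $e(S)\leq 2|S|-3=2k-1$. Both are valid one-line contradictions. Your counting route has the advantage of using only the edge bound $e(S)\leq 2|S|-3$ that the paper has already stated explicitly at the start of Section 2, so it is self-contained and does not invoke the minor characterization of outerplanar graphs (no $K_4$- or $K_{2,3}$-minor), which the paper's contraction argument tacitly relies on; the paper's version is slightly shorter but assumes the reader knows that characterization. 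One small remark: you frame the argument using Claim \ref{claim1.5} ($B'=\emptyset$), but neither step actually needs it --- everything you use follows from $A'=N(u')$ alone, which is how the paper proceeds as well.
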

\begin{proof}
We first assert that $d_{A'}(v) \leq 2$ for arbitrary vertex $v\in V(G[A'])$. If not, then there is a $K_{2,3}$ in $G[A'\cup\{u'\}]$, a contradiction.
Next we assert that there is no cycle in $G[A']$. If not, then there is a cycle in $G[A']$. And we can contract the cycle into a triangle,
it means that there is a $K_{4}$-minor in $G$, a contradiction.
Thus, $G[A']$ is a union of disjoint induced paths.
\end{proof}
Form Claim \ref{claim1.6}, we can get Theorem \ref{thm1} (ii).
\end{proof}

\section{Proofs of Theorems \ref{thm2} and \ref{thm3}}\label{sc3}
In this section, we present the proofs of Theorems \ref{thm2} and \ref{thm3}.
The following several lemmas will be used in the sequel.
We must emphasize that some of these lemmas are adapted from the ideas of Fang, Lin, and Shi \cite{L.F. Fang-2}, and our version makes these lemmas more suitable for use in outerplanar graphs.

\begin{definition}\label{de1}
Let $s_1$ and $s_2$ be two integers with $s_1\geq s_2\geq1$, and let $H=P_{s_1}\cup P_{s_2}\cup H_0$, where $H_0$ is a disjoint union of paths. We say that $H^*$ is an $(s_1,s_2)$- transformation of $H$ if
$$H^*:=\begin{cases}P_{s_1+1}\cup P_{s_2-1}\cup H_0&\text{if }s_2\geq2,\\P_{s_1+s_2}\cup H_0&\text{if }s_2=1.\end{cases}$$
Clearly, $H^*$ is a disjoint union of paths, which implies that $K_1 \vee H^*$ is an outerplanar graph.
If $G[R]\cong H$, then we shall show that $\rho(K_1 \vee H^*)>\rho(K_1 \vee H)$ for sufficiently large $n.$
\end{definition}

\begin{lemma}\label{lm1}
Let $H$ and $H^*$ be the two graphs as shown in Definition \ref{de1}.
If $n \geq 6.5025\times2^{s_2+2}$, then $\rho(K_{1} \vee H^*) > \rho(K_{1} \vee H)$.
\end{lemma}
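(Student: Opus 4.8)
The plan is to fix $\rho:=\rho(K_{1}\vee H)$ and exploit that the apex vertex $u$ of the join is adjacent to everything, so after normalizing the Perron vector by $x_{u}=1$ the entries on each path component obey a single second-order recurrence. Since $K_{1}\vee H$ contains a spanning star $K_{1,n-1}$, we have $\rho\geq\sqrt{n-1}$; in particular $\rho>2$ and $\rho$ lies strictly above every path-block eigenvalue (all of which lie in $(-2,2)$), which is exactly what makes the entries solvable in closed form. For a component $P_{m}=v_{1}v_{2}\cdots v_{m}$ the eigen-equations read $\rho x_{v_{i}}=1+x_{v_{i-1}}+x_{v_{i+1}}$ under the convention $x_{v_{0}}=x_{v_{m+1}}=0$.

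Summing these over $i$ and noting that the total within-path neighbour contribution equals $2g(m)-E_{m}$, where $g(m):=\sum_{v\in P_{m}}x_{v}$ and $E_{m}:=x_{v_{1}}+x_{v_{m}}$, I obtain the clean identity
\[
g(m)=\frac{m-E_{m}}{\rho-2},
\]
in which $g(m)$ and $E_{m}$ depend on $\rho$. The apex equation then becomes $\rho=\sum_{\text{components}}g(m_{i})$. Setting $\Phi_{H}(\rho):=\sum_{i}g(m_{i})-\rho$, the value $\rho(K_{1}\vee H)$ is the largest zero of $\Phi_{H}$ on $(2,\infty)$, where $\Phi_{H}$ is continuous with $\Phi_{H}\to+\infty$ as $\rho\downarrow2$ and $\Phi_{H}\to-\infty$ as $\rho\to\infty$. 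Consequently it suffices to verify $\Phi_{H^{*}}(\rho(K_{1}\vee H))>0$: since $\Phi_{H^{*}}<0$ everywhere above its largest zero $\rho(K_{1}\vee H^{*})$, this forces $\rho(K_{1}\vee H)<\rho(K_{1}\vee H^{*})$, and no global monotonicity is needed. Because $H$ and $H^{*}$ share the $H_{0}$-part and the linear $m$-terms cancel, the difference collapses, for $s_{2}\geq2$, to
\[
\Phi_{H^{*}}(\rho)-\Phi_{H}(\rho)=\frac{(E_{s_{2}}-E_{s_{2}-1})-(E_{s_{1}+1}-E_{s_{1}})}{\rho-2}.
\]

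Everything therefore reduces to one monotonicity fact: the sequence $E_{m}$ is strictly increasing and strictly concave, i.e. its increments $E_{m}-E_{m-1}$ strictly decrease in $m$. I would establish this from the explicit solution of the recurrence: writing the small root of $r^{2}-\rho r+1=0$ as $\lambda=e^{-\kappa}$, the symmetric solution yields $E_{m}=\frac{2}{\rho-2}\bigl(1-\cosh(\kappa(m-1)/2)/\cosh(\kappa(m+1)/2)\bigr)$, and the elementary inequality $\tanh(a-\kappa)-\tanh(a)<0$ delivers both monotonicity and concavity. Since $s_{2}\leq s_{1}<s_{1}+1$, the increment at index $s_{2}$ strictly exceeds the increment at index $s_{1}+1$, so the displayed quantity is positive and the case $s_{2}\geq2$ is settled essentially for free. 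For $s_{2}=1$ the swap additionally creates one new edge, so the relevant difference is $g(s_{1}+1)-g(s_{1})-g(1)=\frac{1-(E_{s_{1}+1}-E_{s_{1}})}{\rho-2}-\frac{1}{\rho}$, which is positive once the single increment $E_{s_{1}+1}-E_{s_{1}}$ (geometrically small in the path length) is smaller than $2/\rho$; this is where the hypothesis $n\geq6.5025\times2^{s_{2}+2}$ is consumed to dominate the $2^{s_{2}}$-type constants coming from the geometric tails of the path entries.

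I expect the main obstacle to be the quantitative bookkeeping rather than the qualitative sign: the exact identity settles $s_{2}\geq2$ with no error term, so the exponential-in-$s_{2}$ threshold on $n$ presumably reflects the cruder two-sided entrywise estimates one resorts to when avoiding the closed form (bounding $E_{m}-E_{m-1}$ and the remainder terms by geometric series in $\lambda\approx\rho^{-1}\approx n^{-1/2}$, whose accumulated constants scale like $2^{s_{2}}$). The genuinely delicate points are the case $s_{2}=1$, the worst case $s_{1}=s_{2}$ where the concavity gap is smallest, and verifying that $\rho(K_{1}\vee H)$ and $\rho(K_{1}\vee H^{*})$ both lie safely in the regime $\rho>2$ above all block eigenvalues so that the identity for $g(m)$ is valid and $\Phi$ is identified with the characteristic equation. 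Once these are in place, the cancellation of the $H_{0}$-part and the reduction to $\Phi_{H^{*}}(\rho(K_{1}\vee H))>0$ are routine.
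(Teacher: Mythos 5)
Your proposal is correct in substance but takes a genuinely different route from the paper. The paper argues locally: for $s_2=1$ it uses the subgraph inclusion $H\subset H^*$; for $s_2=2$ a single edge rotation compared via the Rayleigh quotient; and for $s_2\geq 3$ a two-edge swap (deleting $v_{t_1}v_{t_1+1}$, $w_{t_2}w_{t_2+1}$ and adding $v_{t_1}w_{t_2}$, $v_{t_1+1}w_{t_2+1}$ with $t_1+t_2=s_2-1$), whose sign is controlled by inductive ``box'' estimates $\rho^i(x_{v_{i+1}}-x_{v_i})\in A_i$ and $\rho^i(x_{v_i}-x_{w_i})\in B_i$ on the Perron entries of $K_1\vee H$ itself; the hypothesis $n\geq 6.5025\times 2^{s_2+2}$ is exactly what keeps the accumulated $2^i$-type errors below the main term $1/\rho^{i+1}$. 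You instead work with the exact characteristic function at the apex: normalizing $x_u=1$, each path contributes $g(m)=(m-E_m)/(\rho-2)$, the spectral radius is the largest root of $\Phi_H(\rho)=\sum_i g(m_i)-\rho$ on $(2,\infty)$, and the comparison collapses to strict concavity of $m\mapsto E_m$, which your closed form $E_m=\frac{2}{\rho-2}\left(1-\cosh\kappa+\sinh\kappa\tanh\left(\kappa(m+1)/2\right)\right)$ makes transparent, $\tanh$ being strictly increasing and strictly concave on $(0,\infty)$. Your approach buys exactness: there are no error terms, the worst case $s_1=s_2$ costs nothing (strict concavity still separates the indices $s_2<s_1+1$), and in fact the exponential hypothesis on $n$ is not needed at all---only $\rho>2$ is used, which holds for $n\geq 6$ since $K_{1,n-1}\subseteq K_1\vee H$. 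What the paper's cruder local method buys is robustness: the same swap-plus-estimates skeleton is what gets reused in the planar setting $K_2\vee H$ of Section \ref{sc4}, where one wants to avoid redoing an exact analysis.

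Two small repairs to your write-up. First, $\Phi_H$ does not tend to $+\infty$ as $\rho\downarrow 2$: at $\rho=2$ the system $2x_i=1+x_{i-1}+x_{i+1}$, $x_0=x_{m+1}=0$ still has the bounded solution $x_i=i(m+1-i)/2$, so $\Phi_H(2^+)$ is finite. This claim is never actually used, though: all your argument needs is that every zero of $\Phi_{H^*}$ in $(2,\infty)$ is an eigenvalue of $K_1\vee H^*$ (solve the path recurrences and the apex equation is exactly $\Phi_{H^*}=0$), that $\rho(K_1\vee H)$ is a zero of $\Phi_H$ (the Perron vector has $x_u\neq 0$), and that $\Phi_{H^*}\to-\infty$ at infinity; then $\Phi_{H^*}(\rho(K_1\vee H))>0$ produces a strictly larger eigenvalue of $K_1\vee H^*$ by the intermediate value theorem. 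Second, the case $s_2=1$ does not consume the hypothesis on $n$ either: the required inequality $E_{s_1+1}-E_{s_1}<2/\rho$ follows from your own concavity statement, since $E_{s_1+1}-E_{s_1}\leq E_2-E_1=\frac{2}{\rho-1}-\frac{2}{\rho}=\frac{2}{\rho(\rho-1)}<\frac{2}{\rho}$ (or, even faster, note that $H\subset H^*$ when $s_2=1$, which is how the paper dispatches this case). With these two points tidied up, your argument is complete and in fact proves a stronger statement than Lemma \ref{lm1}, with no exponential threshold on $n$.
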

\begin{proof}
Note that both $K_1 \vee H$ and $K_1 \vee H^*$ are connected outerplanar graphs containing $K_{1,n-1}$ as a subgraph. Let $\rho = \rho(K_1 \vee H)$ and $\rho^* = \rho(K_1 \vee H^*)$ for convenience.
Suppose that $P' := v_1v_2 \cdots v_{s_1}$ and $P'' := w_1w_2 \cdots w_{s_2}$ are two components of $H$. If $s_2 = 1$, then $H \subset H^*$, and so $G \subset K_1 \vee H^*$.  
It follows that $\rho^* > \rho$, and the result holds.
Next, we deal with the case $s_2 = 2$. If $x_{v_1} \leq x_{w_1}$, then let $H'$ denote the graph obtained from $H$ by deleting the edge $v_1v_2$ and adding the edge $v_2w_1$.  
Clearly, $H' \cong H^*$. Moreover,  
\[  
\rho^* - \rho \geq \frac{X^T(A(K_1 \vee H^*) - A(G))X}{X^TX} \geq \frac{2}{X^TX}(x_{w_1} - x_{v_1})x_{v_2} \geq 0.  
\]  
Since $X$ is a positive eigenvector of $G$, we have $\rho x_{v_1} = 1 + x_{v_2}$.  
If $\rho^* = \rho$, then $X$ is also a positive eigenvector of $K_1 \vee H^*$, and so $\rho^* x_{v_1} = 1$, contradicting that $\rho x_{v_1} = 1 + x_{v_2}$.  
Thus, $\rho^* > \rho$, and the result holds. The case $x_{v_1} > x_{w_1}$ is similar and hence omitted here.
In what follows, we denote the vertex of degree $n-1$ by $u'$, and let $A=N(u')$ for convenience.
For $s_2\geq3$, we have the following claims.

\begin{claimm}\label{claim3.1}
$x_u\in\left[\frac{1}{\rho},\frac{1}{\rho}+\frac{2.04}{\rho^{2}}\right]$ for any vertex $u\in A$.
\end{claimm}
\begin{proof}
By Theorem \ref{thm1}, we can see $x_{u^{\prime}}=1.$ Thus
\begin{equation}\label{gongshi1}
  \rho x_{u}=x_{u^{\prime}}+\sum_{v\in N_{A}(u)}x_{v}=1+\sum_{v\in N_{A}(u)}x_{v}.
\end{equation}
Moreover, recall that $d_{A}(v)\leq2$ and $x_v \leq 1$ for any vertex $v\in A$.
Thus, $\rho x_u=\sum_{v\in N_G(u)}x_v\leq 3$,
and so $x_u\leq\frac{3}{\rho}\leq\frac{3}{\frac{4}{5}\sqrt{n}}< 0.1$ for $n > 1.41\times10^3$.
Combining this with (\ref{gongshi1}) gives   
\[  
  x_u \in \left[ \frac{1}{\rho}, \frac{1.02}{\rho} \right].  
\]  
By (\ref{gongshi1}) again, we have $\rho x_u\in\left[1,1+\frac{2.04}\rho\right]$,
which indicates that   
\[  
  x_u \in \left[ \frac{1}{\rho}, \frac{1}{\rho} + \frac{2.04}{\rho^2} \right].  
\]  
\end{proof}

Let $i$ be a positive integer, and define the boxes
\[  
A_{i} = \left[\frac{1}{\rho} - \frac{2.04 \times 2^{i}}{\rho^{2}}, \frac{1}{\rho} + \frac{2.04 \times 2^{i}}{\rho^{2}}\right]  
\quad \text{and} \quad  
B_{i} = \left[-\frac{2.02 \times 2^{i}}{\rho^{2}}, \frac{2.02 \times 2^{i}}{\rho^{2}}\right].  
\]  
\begin{claimm}\label{claim3.2} The following statements hold:  
 \begin{description}
   \item[(i)] $\rho^{i}(x_{v_{i+1}}-x_{v_{i}})\in A_{i}$ for $1\leq i\leq\lfloor\frac{s_{1}-1}{2}\rfloor$;
   \item[(ii)] $\rho^{i}(x_{w_{i+1}}-x_{w_{i}})\in A_{i}$ for $1\leq i\leq\lfloor\frac{s_{2}-1}{2}\rfloor$;
   \item[(iii)] $\rho^{i}(x_{v_{i}}-x_{w_{i}})\in B_{i}$ for $1\leq i\leq\lfloor\frac{s_{2}}{2}\rfloor$.
 \end{description}
\end{claimm}

\begin{proof}

(i)It suffices to demonstrate that for every $i \in \{1, \ldots, \lfloor \frac{s_1 - 1}{2} \rfloor\}$,
\[
\rho^i(x_{v_{j+1}} - x_{v_j}) \in \begin{cases}
A_i, & \text{if } j = i, \\
B_i, & \text{if } i + 1 \leq j \leq s_1 - i - 1.
\end{cases}
\]
We proceed with the proof by induction on $i$. Initially, we observe that
\begin{equation}\label{eq:gongshi2}
\rho x_{v_j} = x_{u'} + \sum_{v \in N_R(v_j)} x_v = \begin{cases}
1 + x_{v_2}, & \text{if } j = 1, \\
1 + x_{v_{j-1}} + x_{v_{j+1}}, & \text{if } 2 \leq j \leq s_1 - 1.
\end{cases}
\end{equation}
By Claim \ref{claim3.1}, we can deduce that
\[
\rho(x_{v_{j+1}} - x_{v_j}) = \begin{cases}
x_{v_1} + x_{v_3} - x_{v_2} \in A_1, & \text{if } j = 1, \\
(x_{v_j} - x_{v_{j-1}}) + (x_{v_{j+2}} - x_{v_{j+1}}) \in B_1, & \text{if } 2 \leq j \leq s_1 - 2.
\end{cases}
\]
Thus, the result holds for $i = 1$. Now, suppose $2 \leq i \leq \lfloor \frac{s_1 - 1}{2} \rfloor$, and assume the result holds for $i - 1$, i.e.,
\begin{equation}\label{eq:gongshi3}
\rho^{i-1}(x_{v_{l+1}} - x_{v_l}) \in \begin{cases}
A_{i-1}, & \text{if } l = i - 1, \\
B_{i-1}, & \text{if } i \leq l \leq s_1 - i.
\end{cases}
\end{equation}
Note that for each $j \in \{i, \ldots, s_1 - i - 1\}$, we have $\rho(x_{v_{j+1}} - x_{v_j}) = (x_{v_j} - x_{v_{j-1}}) + (x_{v_{j+2}} - x_{v_{j+1}})$. Hence,
\begin{equation}\label{eq:gongshi4}
\rho^i(x_{v_{j+1}} - x_{v_j}) = \rho^{i-1}(x_{v_j} - x_{v_{j-1}}) + \rho^{i-1}(x_{v_{j+2}} - x_{v_{j+1}}).
\end{equation}
If $j = i$, then by \eqref{eq:gongshi3}, $\rho^{i-1}(x_{v_j} - x_{v_{j-1}}) \in A_{i-1}$ and $\rho^{i-1}(x_{v_{j+2}} - x_{v_{j+1}}) \in B_{i-1}$. Thus, $\rho^i(x_{v_{j+1}} - x_{v_j}) \in A_i$ by \eqref{eq:gongshi4}, as desired. Similarly, if $i + 1 \leq j \leq s_1 - i - 1$, then $\rho^{i-1}(x_{v_j} - x_{v_{j-1}}) \in B_{i-1}$ and $\rho^{i-1}(x_{v_{j+2}} - x_{v_{j+1}}) \in B_{i-1}$. Thus, $\rho^i(x_{v_{j+1}} - x_{v_j}) \in B_i$ by \eqref{eq:gongshi4}. Hence, the result follows.

(ii) The proof is analogous to (i) and is omitted here.

(iii)It suffices to prove that $\rho^i(x_{v_j} - x_{w_j}) \in B_i$ for all $i \in \{1, \ldots, \lfloor \frac{s_2}{2} \rfloor\}$ and $j \in \{i, \ldots, s_2 - i\}$. We proceed by induction on $i$. Clearly,
\[
\rho x_{w_j} = x_{u'} + \sum_{w \in N_R(w_j)} x_w = \begin{cases}
1 + x_{w_2}, & \text{if } j = 1, \\
1 + x_{w_{j-1}} + x_{w_{j+1}}, & \text{if } 2 \leq j \leq s_2 - 1.
\end{cases}
\]
By Claim \ref{claim3.1}, we have
\[
\rho(x_{v_j} - x_{w_j}) = \begin{cases}
x_{v_2} - x_{w_2} \in B_1, & \text{if } j = 1, \\
(x_{v_{j+1}} - x_{w_{j+1}}) + (x_{v_{j-1}} - x_{w_{j-1}}) \in B_1, & \text{if } 2 \leq j \leq s_2 - 1.
\end{cases}
\]
Thus, the result holds for $i = 1$. Now, suppose $2 \leq i \leq \lfloor \frac{s_2}{2} \rfloor$, and assume the result holds for $i - 1$, i.e., $\rho^{i-1}(x_{v_l} - x_{w_l}) \in B_{i-1}$ for all $l \in \{i - 1, \ldots, s_2 - i + 1\}$. For each $j \in \{i, \ldots, s_2 - i\}$, we observe that $\rho(x_{v_j} - x_{w_j}) = (x_{v_{j-1}} - x_{w_{j-1}}) + (x_{v_{j+1}} - x_{w_{j+1}})$. Therefore,
\begin{equation}\label{eq:gongshi5}
\rho^i(x_{v_j} - x_{w_j}) = \rho^{i-1}(x_{v_{j-1}} - x_{w_{j-1}}) + \rho^{i-1}(x_{v_{j+1}} - x_{w_{j+1}}).
\end{equation}
By the induction hypothesis, $\rho^{i-1}(x_{v_{j-1}} - x_{w_{j-1}}) \in B_{i-1}$ and $\rho^{i-1}(x_{v_{j+1}} - x_{w_{j+1}}) \in B_{i-1}$. Hence, $\rho^i(x_{v_j} - x_{w_j}) \in B_i$ by \eqref{eq:gongshi5}, as desired.
\end{proof}

Since $n \geq 6.5025 \times 2^{s_2 + 2}$, it follows that $\rho \geq \frac{4}{5}\sqrt{n} > 2.04 \times 2^{\frac{s_2}{2} + 1}$. Combining this with Claim \ref{claim3.2}, we obtain  
\begin{equation}\label{eq:gongshi6}  
  x_{v_{i+1}} - x_{v_i} \geq \frac{1}{\rho^{i+1}} - \frac{2.04 \times 2^i}{\rho^{i+2}} > 0  
\end{equation}  
whenever $i \leq \min\left\{\frac{s_2}{2}, \left\lfloor\frac{s_1 - 1}{2}\right\rfloor\right\}$ and  
\begin{equation}\label{eq:gongshi7}  
  x_{v_{i+1}} - x_{w_i} = (x_{v_{i+1}} - x_{v_i}) + (x_{v_i} - x_{w_i}) \geq \left(\frac{1}{\rho^{i+1}} - \frac{2.04 \times 2^i}{\rho^{i+2}}\right) - \frac{2.04 \times 2^i}{\rho^{i+2}} > 0  
\end{equation}  
whenever $i \leq \min\left\{\left\lfloor\frac{s_2}{2}\right\rfloor, \left\lfloor\frac{s_1 - 1}{2}\right\rfloor\right\}$. Similarly, we can also deduce that  
\begin{equation}\label{eq:gongshi8}  
  x_{w_{i+1}} > x_{w_i} \text{ and } x_{w_{i+1}} > x_{v_i}  
\end{equation}  
whenever $i \leq \left\lfloor\frac{s_2 - 1}{2}\right\rfloor$.  
  
Recall that $s_2 \geq 3$. Let $t_1$ and $t_2$ be two positive integers such that $t_1 + t_2 = s_2 - 1$.
Let $H^*$ denote the graph obtained by deleting the edges $v_{t_1}v_{t_1+1}$ and $w_{t_2}w_{t_2+1}$ and adding the edges $v_{t_1}w_{t_2}$ and $v_{t_1+1}w_{t_2+1}$ in $H$. Then,  
\begin{equation}\label{eq:gongshi9}  
  \rho^* - \rho \geq \frac{X^T(A(K_1 \vee H^*) - A(K_1 \vee H))X}{X^TX} \geq \frac{2}{X^TX}(x_{v_{t_1+1}} - x_{w_{t_2}})(x_{w_{t_2+1}} - x_{v_{t_1}}).  
\end{equation}
If $s_2$ is odd, then we take $t_1 = t_2 = \frac{s_2 - 1}{2}$. Then, it follows from (\ref{eq:gongshi7}) and (\ref{eq:gongshi8}) that $x_{v_{t_1+1}} > x_{w_{t_2}}$ and $x_{w_{t_2+1}} > x_{v_{t_1}}$. Therefore, $\rho^* > \rho$ by (\ref{eq:gongshi9}), as desired.
If $s_2$ is even, then we only consider the case that $x_{w_{s_2/2}} \geq x_{v_{s_2/2}}$ since the proof for the case that $x_{w_{s_2/2}} < x_{v_{s_2/2}}$ is similar. Take $t_1 = \frac{s_2}{2}$ and $t_2 = \frac{s_2}{2} - 1$. Then, $x_{w_{t_2+1}} \geq x_{v_{t_1}}$ as $x_{w_{s_2/2}} \geq x_{v_{s_2/2}}$. If $s_1 = s_2$, then $s_1$ is even, and hence $x_{v_{s_1/2+1}} = x_{v_{s_1/2}}$ by symmetry, that is, $x_{v_{t_1+1}} = x_{v_{t_1}}$. If $s_1 \geq s_2 + 1$, then $x_{v_{t_1+1}} > x_{v_{t_1}}$ by (\ref{eq:gongshi6}). In both cases, we have $x_{v_{t_1+1}} \geq x_{v_{t_1}}$. Furthermore, from (\ref{eq:gongshi7}), we get $x_{v_{t_1}} > x_{w_{t_2}}$, and so $x_{v_{t_1+1}} > x_{w_{t_2}}$. Thus, $\rho^* \geq \rho$ by (\ref{eq:gongshi9}). If $\rho^* = \rho$, then $X$ is also an eigenvector of $\rho^*$, and so $\rho x_{v_{t_1}} = \rho^* x_{v_{t_1}} = 1 + x_{v_{t_1-1}} + x_{w_{t_2}}$. On the other hand, since $X$ is an eigenvector of $\rho$, we have $\rho x_{v_{t_1}} = 1 + x_{v_{t_1-1}} + x_{v_{t_1+1}}$.
Hence, $x_{v_{t_1+1}} = x_{w_{t_2}}$, which is a contradiction. Therefore, we conclude that $\rho^* > \rho$, and the result follows.
\end{proof}

\begin{lemma}\cite{J. Shu}.\label{lm2}
Let $G$ be a connected outerplanar graph on $n\geq3$ vertices. Then $\rho\left(G\right)\leq\frac32+\sqrt{n-\frac74}$.
\end{lemma}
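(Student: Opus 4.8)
The plan is to reduce the bound to an equivalent quadratic inequality and then extract it from a two-step walk count at the dominant vertex, exploiting the structural restrictions that outerplanarity forces. First I would observe that $\rho\le \frac32+\sqrt{n-\frac74}$ is equivalent to $\rho^{2}-3\rho\le n-4$, so it suffices to prove the latter. If $\rho\le 3$ this is immediate for $n\ge 4$, and $n=3$ is checked by hand (where $K_3$, with $\rho=2$, is the only relevant graph), so I may assume $\rho>3$.

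Let $X$ be the Perron vector normalized so that its maximum entry is $x_u=1$, attained at a vertex $u$. The engine of the argument is the identity $\rho^{2}x_u=\sum_{k\in N(u)}\sum_{j\sim k}x_j$, which I would split according to whether the outer endpoint $j$ equals $u$, lies in $N(u)$, or lies in $T:=V(G)\setminus N[u]$. Two structural facts from outerplanarity control the last two pieces: since $K_1\vee G[N(u)]$ must be outerplanar, $G[N(u)]$ is a linear forest (by the same reasoning as in Claim~\ref{claim1.6}), so every $j\in N(u)$ has at most two neighbours inside $N(u)$; and since $G$ is $K_{2,3}$-free, every $j\in T$ has at most two neighbours in $N(u)$. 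Using $\sum_{k\in N(u)}x_k=\rho x_u=\rho$, these yield $\rho^{2}\le d_u+2\rho+2S$, where $S:=\sum_{j\in T}x_j$. Combined with the trivial bound $\rho=\sum_{k\in N(u)}x_k\le d_u$, everything is reduced to controlling the eigenvector mass $S$ on the second neighbourhood.

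To bound $S$ I would re-apply the eigenequation on $T$: $\rho S=\sum_{j\in T}\sum_{k\sim j}x_k$, where each neighbour $k$ lies in $N(u)$ or in $T$. Estimating the weights and invoking the outerplanar edge bounds $e(T)\le 2|T|-3$ and $e(N(u),T)\le 2|T|$ gives $\rho S\le 6|T|-6$, that is, $S\le 6(n-1-d_u)/\rho$. Substituting into $\rho^{2}\le d_u+2\rho+2S$ and using $d_u\le n-1$ produces a self-contained inequality in $\rho$ whose leading balance is $\rho^{2}\approx d_u\le n$; solving it already delivers $\rho\le\sqrt{n}+O(1)$, so the correct order of magnitude and leading constant fall out cleanly from the walk count.

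The main obstacle is sharpening this $O(1)$ to the exact constants $\frac32$ and $\frac74$. The crude step $x_k\le 1$ used in estimating $\rho S$ is too wasteful to land $\frac32+\sqrt{n-\frac74}$ on the nose; to recover the precise constant I would instead keep the eigenvector weights in play, bounding $\sum_{j\in T}d_{N(u)}(j)x_j$ and the intra-neighbourhood contribution $\sum_{j\in N(u)}d_{N(u)}(j)x_j$ more tightly (accounting, for instance, for path endpoints of internal degree $1$ and for the decay of $x_j$ away from $u$), and then optimise the resulting quadratic over the feasible range of $d_u$. Alternatively, one may first pass to an edge-maximal outerplanar supergraph with exactly $2n-3$ edges, which is legitimate since $\rho$ is monotone under subgraphs, so that the edge counts hold with equality and the bookkeeping streamlines. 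Either way, the structural skeleton is routine and the entire difficulty concentrates in the final constant-optimisation.
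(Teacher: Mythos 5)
First, note what you are comparing against: the paper does not prove this lemma at all --- it is imported verbatim from Shu and Hong \cite{J. Shu} --- so the only question is whether your argument stands on its own. It does not: there is a genuine gap, and it sits exactly where the lemma's content lies. Your skeleton is sound and is the same device the paper uses in Claim~\ref{claim1.2}: with $x_u=1$ maximal, $K_{2,3}$-freeness and the linear-forest structure of $G[N(u)]$ give $\rho^2\le d_u+2\rho+2S$ with $S=\sum_{j\in T}x_j$, and the eigenequation on $T$ with $e(N(u),T)\le 2|T|$, $e(T)\le 2|T|-3$ gives $\rho S\le 6|T|-6$. But executed as you describe --- inserting $d_u\le n-1$ and $|T|\le n-1-\rho$ independently --- you obtain $\rho^2\le n-1+2\rho+12(n-1-\rho)/\rho$, which solves to $\rho\le\sqrt{n}+C$ with $C$ roughly $7$, not $\tfrac32$. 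Since the entire assertion of the lemma is the exact form $\tfrac32+\sqrt{n-\tfrac74}$ (a bound $\sqrt{n}+O(1)$ with an unspecified constant cannot be substituted into the statement), and since you explicitly defer precisely this step to an unexecuted ``constant-optimisation,'' the statement is not proven. A proof whose final sentence is that ``the entire difficulty concentrates in the final constant-optimisation'' has stopped before the difficulty.

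Two further remarks on how far your machinery could actually be pushed. (a) Your two substitutions cannot be tight simultaneously: the right-hand side $d_u(1-12/\rho)+2\rho+(12n-24)/\rho$ is increasing in $d_u$ once $\rho>12$, so (treating the degenerate cases $|T|\le 1$ separately) plugging $d_u=n-1$ yields essentially $\rho^2\le n-1+2\rho$, i.e.\ $\rho\lesssim 1+\sqrt{n}\le\tfrac32+\sqrt{n-\tfrac74}$ for $n\ge 4$; so a more careful reading of your own inequality closes the regime $\rho>12$. (b) Even then the lemma claims all $n\ge 3$, and when $3<\rho\le 12$ the target bound $\tfrac32+\sqrt{n-\tfrac74}\ge 12$ only once $n\ge 112$; the range of moderate $n$ is untouched by any version of your walk-counting argument, whereas the cited proof of Shu and Hong covers it. Your fallback reduction to an edge-maximal outerplanar graph (a triangulation of a polygon, with exactly $2n-3$ edges and vertices of degree $2$) is legitimate, since adding edges only increases $\rho$, and it is indeed the natural setting for extracting the sharp constants --- but you never carry it out, so it remains a plan rather than a proof.
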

\vspace*{2mm}
Now we give the proofs of Theorem \ref{thm2} and Theorem \ref{thm3}.

\begin{proof}[\textbf{Proof of Theorem \ref{thm2}}]

Let $G$ be the extremal graph with $\textit{spex}_{\mathcal{OP}}(n,B_{tl})$.
Noting that $F\in\{B_{tl}|l\geq3,t\geq1\}$ is a subgraph of $K_{1} \vee P_{n-1}$, and is not of $K_{1} \vee ((t-1)K_2\cup(n-2t+1)K_1)$,
then $G\cong K_{1} \vee G[A]$ and $G[A]$ is a disjoint union of paths by Theorem \ref{thm1}.
Suppose that $G[A]=\cup_{i=1}^qP_{n_i}$, where $q\geq2$ and $n_1\geq n_2\geq\cdots\geq n_q$.
Let $H$ be a disjoint union of $q$ paths. We use $n_i(H)$ to denote the order of the $i$-th longest path of $H$ for any $i\in\{1,...,q\}.$ 
  
If $t = 1$, then $B_{tl} \cong C_{l}$.   
When $l = 3$, it is evident that $G \cong K_{1, n-1}$ since $G$ is $C_3$-free; otherwise, adding any edge would create a $C_3$, which is a contradiction.  
When $l = 4$, let $J_n$ be the graph obtained from $K_1 \vee (n-1)K_1$ by embedding a maximum matching within its independent set. Nikiforov \cite{V. Nikiforov-2} and Zhai and Wang \cite{M.Q. Zhai-1} independently proved that $J_n$ is the extremal graph for $\textit{spex}(n, C_4)$ for both odd and even $n$. Clearly, $J_n$ is an outerplanar graph, thus $G \cong J_n$.  

It remains the case $l\geq 5$.

\begin{claimmm}\label{claim3.3}  
If $H$ is a disjoint union of paths, then $K_1 \vee H$ is $B_{1l}$-free if and only if $n_1(H) \leq l-2$.  
\end{claimmm}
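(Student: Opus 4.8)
The plan is to exploit the fact that $B_{1l}\cong C_l$ is simply a single $l$-cycle, and to analyze where cycles can live in the join $K_1\vee H$. First I would observe that, since $H$ is a disjoint union of paths, $H$ is a forest and hence contains no cycle of its own; consequently every cycle of $K_1\vee H$ must pass through the apex vertex $u$ of the $K_1$ factor. This reduces the whole problem to understanding the cycles through $u$.

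Next I would describe such cycles explicitly. A cycle through $u$ has the form $u\,v_1v_2\cdots v_k\,u$, where $v_1,\dots,v_k\in V(H)$ are distinct: the two edges $uv_1$ and $v_ku$ are present because $u$ is adjacent to all of $V(H)$, while each edge $v_iv_{i+1}$ avoids $u$ and is therefore an edge of $H$. Hence $v_1v_2\cdots v_k$ is a path in $H$, and since $H$ has no edges between distinct components, this path lies inside a single path-component of $H$. A path on $k$ vertices therefore exists in $H$ precisely when some component has at least $k$ vertices, i.e. when $n_1(H)\geq k$. Counting vertices, the cycle $u\,v_1\cdots v_k\,u$ is a copy of $C_{k+1}$.

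The two directions then follow by setting $k+1=l$. For the ``if'' direction I would show that if $n_1(H)\leq l-2$, then $H$ contains no path on $l-1$ vertices, so by the above no cycle through $u$ has length $l$, and $K_1\vee H$ is $C_l$-free. For the ``only if'' direction, if $n_1(H)\geq l-1$, then the longest component of $H$ contains a subpath $v_1\cdots v_{l-1}$ on $l-1$ vertices, and appending $u$ yields a copy of $C_l$ in $K_1\vee H$; contrapositively, $C_l$-freeness forces $n_1(H)\leq l-2$.

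I do not anticipate a genuine obstacle here. The only point requiring care is the vertex bookkeeping: a cycle through $u$ built from a path on $k$ vertices of $H$ is a $C_{k+1}$, so a copy of $C_l$ corresponds to $k=l-1$, which yields the threshold $l-2$ rather than $l-1$ in the statement. This should be paired with the elementary remark that a path-component on $m$ vertices contains a subpath on exactly $k$ vertices for every $k\leq m$ and none longer, so that the existence of an $(l-1)$-vertex path in $H$ is equivalent to $n_1(H)\geq l-1$.
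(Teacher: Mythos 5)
Your proof is correct and takes essentially the same route as the paper's: both arguments rest on the observation that, since $H$ is a forest, every cycle of $K_1\vee H$ is a path of $H$ closed up through the apex, so the realizable cycle lengths are exactly $3,\ldots,n_1(H)+1$. The paper merely asserts the two key facts (the longest cycle has length $n_1(H)+1$, and $K_1\vee P_{n_1(H)}$ contains $C_i$ for every $3\le i\le n_1(H)+1$) while you derive them explicitly, so your write-up is just a more detailed version of the same argument.
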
  
  
\begin{proof}  
Note that the length of the longest cycle in $K_1 \vee H$ is $n_1(H) + 1$, and $K_1 \vee P_{n_1(H)}$ contains a cycle $C_i$ for each $i \in \{3, \ldots, n_1(H) + 1\}$.  
Thus, $n_1(H) + 1 \leq l - 1$ if and only if $K_1 \vee H$ is $C_l$-free, i.e., $K_1 \vee H$ is $B_{1l}$-free.  
Therefore, the claim holds.  
\end{proof}  
  
Recall that $n_i$ (resp., $n_i(H)$) denotes the order of the $i$-th longest path of $G[A]$ (resp., $H$) for any $i \in \{1, \ldots, q\}$.  
By Claim \ref{claim3.3} and direct computation, we have $n_2 \leq n_1 \leq l - 2$, and so $6.5025 \times 2^l \geq 6.5025 \times 2^{n_2 + 2}$. Thus,  
\begin{equation}\label{gongshin}  
n \geq \max\{1.27 \times 10^7, [(64 + |V(F)|)^{1/2} + 8]^2, 6.5025 \times 2^{n_2+2}\}.  
\end{equation}  
  
\begin{claimmm}\label{claim3.4}  
$n_i = l - 2$ for $i \in \{1, 2, \ldots, q - 1\}$.  
\end{claimmm}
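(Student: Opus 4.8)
The plan is to argue by contradiction using the $(s_1,s_2)$-transformation of Lemma~\ref{lm1}. Recall that $G \cong K_1 \vee G[A]$ with $G[A] = \bigcup_{i=1}^q P_{n_i}$, $n_1 \geq n_2 \geq \cdots \geq n_q$, and that by Claim~\ref{claim3.3} the $B_{1l}$-freeness (i.e. $C_l$-freeness) of $G$ forces $n_1 \leq l-2$. I would suppose, for contradiction, that $n_j < l-2$ for some index $j \in \{1,\dots,q-1\}$, and focus on the two components $P_{n_j}$ and $P_{n_{j+1}}$ of $G[A]$; the latter exists precisely because $j \leq q-1$, and we have $n_j \geq n_{j+1} \geq 1$.

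Next I would apply the $(n_j,n_{j+1})$-transformation to $G[A]$, writing $H = P_{n_j} \cup P_{n_{j+1}} \cup H_0$ with $H_0$ the union of the remaining paths, and letting $H^*$ be its transform as in Definition~\ref{de1}. Three things must be checked. First, $H^*$ is again a disjoint union of paths, so $K_1 \vee H^*$ is outerplanar. Second, the transform only creates one new longer path $P_{n_j+1}$ (when $n_{j+1}=1$ the two paths merge into $P_{n_j+1}$, and otherwise one gets $P_{n_j+1} \cup P_{n_{j+1}-1}$), whose longest component has at most $n_j + 1 \leq l-2$ vertices because $n_j \leq l-3$; hence by Claim~\ref{claim3.3} the graph $K_1 \vee H^*$ remains $B_{1l}$-free. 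Third, the hypothesis of Lemma~\ref{lm1}, namely $n \geq 6.5025 \times 2^{n_{j+1}+2}$, holds since $j+1 \geq 2$ gives $n_{j+1} \leq n_2$, and $n$ satisfies the bound recorded in~(\ref{gongshin}).

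With these checks in place, Lemma~\ref{lm1} yields $\rho(K_1 \vee H^*) > \rho(K_1 \vee H) = \rho(G)$, while $K_1 \vee H^*$ is an $n$-vertex $B_{1l}$-free outerplanar graph (note that the transformation preserves $\sum_i n_i = |A|$, so $K_1 \vee H^*$ indeed has $n$ vertices). This contradicts the extremality of $G$, and therefore no such index $j$ exists; that is, $n_i = l-2$ for every $i \in \{1,\dots,q-1\}$, which is exactly the assertion of the claim.

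I expect the only delicate point to be the $B_{1l}$-freeness verification after the transformation: the whole argument hinges on the strict inequality $n_j < l-2$ leaving exactly enough room to lengthen the path by one while keeping its order at most $l-2$, so that Claim~\ref{claim3.3} still applies. The spectral strict increase is handed to us directly by Lemma~\ref{lm1}, the outerplanarity is immediate from the fact that $H^*$ is a disjoint union of paths, and the size hypothesis of Lemma~\ref{lm1} follows at once from~(\ref{gongshin}); thus these steps are routine once the length bookkeeping is in order.
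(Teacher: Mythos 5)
Your proof is correct and follows essentially the same route as the paper: assume some $n_j \leq l-3$ with $j \leq q-1$, apply an $(s_1,s_2)$-transformation, check $B_{1l}$-freeness via Claim \ref{claim3.3} and invoke Lemma \ref{lm1} together with (\ref{gongshin}) to get a spectral increase contradicting extremality. The only (immaterial) difference is the choice of partner path: you pair the deficient path $P_{n_j}$ with $P_{n_{j+1}}$, while the paper pairs it with the shortest path $P_{n_q}$; both choices satisfy $s_1 \geq s_2 \geq 1$, keep every component of the transformed graph of order at most $l-2$, and meet the size hypothesis of Lemma \ref{lm1} since $n_{j+1}, n_q \leq n_2$.
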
  
  
\begin{proof}  
Note that $n_i \leq n_1 \leq l - 2$ for each $i \in \{1, \ldots, q - 1\}$.  
Now we assert that $n_i = l - 2$ for $i \in \{1, \ldots, q - 1\}$.  
Otherwise,
let $H'$ be an $(n_{i_0}, n_q)$-transformation of $G[A]$, where $i_0 = \min\{i | 1 \leq i \leq q - 1, n_i \leq l - 3\}$. Clearly, $n_1(H') = \max\{n_2, n_{i_0} + 1\} \leq l - 2$.  
By Claim \ref{claim3.3}, $K_1 \vee H'$ is $C_l$-free, i.e., $K_1 \vee H'$ is $B_{1l}$-free.  
However, by (\ref{gongshin}) and Lemma \ref{lm1}, we have $\rho(K_2 \vee H') > \rho$, contradicting that $G$ is extremal graph with $\textit{spex}_{\mathcal{OP}}(n, C_l)$.  
Therefore, the claim holds.  
\end{proof}  
  
Since $n_i = l - 2$ for $i \in \{1, \ldots, q - 1\}$ and $n_q \leq l - 2$, we can see that $G[A] \cong H_{\mathcal{OP}}(l - 2, l - 2)$. This completes the proof of Theorem \ref{thm2} (i).

For $t\geq2$ and $l\geq3$, we have the following claim.

\begin{claimmm}\label{claim3.5}  
If $H$ is a disjoint union of paths and $K_1 \vee H$ is $B_{tl}$-free, then $n_1 \leq (l - 1)(t - 1) + l - 2$.  
\end{claimmm}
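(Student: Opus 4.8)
The plan is to prove the contrapositive: I will show that if $n_1 \geq (l-1)t$, then $K_1 \vee H$ already contains a copy of $B_{tl}$, which forces $n_1 \leq (l-1)t - 1 = (l-1)(t-1) + l - 2$. The computation $(l-1)(t-1) + (l-1) = (l-1)t$ shows that the failure of the claimed inequality, namely $n_1 \geq (l-1)(t-1) + l - 1$, is exactly the hypothesis $n_1 \geq (l-1)t$, so it suffices to build $B_{tl}$ under this single assumption on the length of the longest path.

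Let $u'$ denote the universal vertex of $K_1 \vee H$ (the vertex of degree $n-1$), and let $P := v_1 v_2 \cdots v_{n_1}$ be a longest path of $H$; since $u'$ is joined to every vertex of $H$, it is adjacent to each $v_j$. I would partition the first $(l-1)t$ vertices of $P$ into $t$ consecutive, pairwise vertex-disjoint blocks $B_1, \ldots, B_t$, where $B_i = \{v_{(i-1)(l-1)+1}, \ldots, v_{i(l-1)}\}$. Each $B_i$ induces a path on $l-1$ vertices, and adjoining $u'$ together with the two edges from $u'$ to the endpoints $v_{(i-1)(l-1)+1}$ and $v_{i(l-1)}$ produces an $l$-cycle $C^{(i)}$ through $u'$ (it uses $l-2$ edges inside $P$ and $2$ edges at $u'$, hence $l$ edges in total).

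It then remains to verify that $C^{(1)}, \ldots, C^{(t)}$ together form a copy of $B_{tl}$, i.e. that the cycles are pairwise edge-disjoint and meet only at $u'$. Their path-edges lie in the disjoint blocks $B_1, \ldots, B_t$ (the connecting edges $v_{i(l-1)}v_{i(l-1)+1}$ between consecutive blocks are used by none of them), so the edges inside $H$ are disjoint. The edges at $u'$ used by $C^{(i)}$ are $u'v_{(i-1)(l-1)+1}$ and $u'v_{i(l-1)}$; because the blocks are vertex-disjoint, these $2t$ endpoints are all distinct, and hence the spoke-edges at $u'$ are pairwise distinct as well. Thus $\bigcup_{i=1}^{t} C^{(i)}$ is precisely $t$ edge-disjoint $l$-cycles sharing the single common vertex $u'$, which is a copy of $B_{tl}$ and contradicts that $K_1 \vee H$ is $B_{tl}$-free. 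This contradiction gives $n_1 \leq (l-1)(t-1) + l - 2$.

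The delicate point, and the place I expect to spend the most care, is the edge-disjointness of the spokes at $u'$: two edge-disjoint sub-paths of a single path can still share an endpoint, and a shared endpoint $v_j$ would force the two corresponding cycles to reuse the same edge $u'v_j$. It is therefore essential to choose the blocks \emph{vertex-disjoint} rather than merely edge-disjoint, which is exactly what raises the vertex requirement to $(l-1)t$ (instead of $t(l-2)+1$, the figure one would get if endpoints could be shared) and what makes the bound $(l-1)(t-1)+l-2$ the right one.
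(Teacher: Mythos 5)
Your proof is correct and follows essentially the same route as the paper's: both argue the contrapositive, partitioning the first $t(l-1)$ vertices of $P_{n_1}$ into $t$ consecutive vertex-disjoint blocks of $l-1$ vertices, each of which forms an $l$-cycle with the universal vertex, yielding a copy of $B_{tl}$. Your additional care about the spokes at $u'$ being distinct (hence the need for vertex-disjoint, not merely edge-disjoint, blocks) is a point the paper leaves implicit, but the argument is the same.
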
  
  
\begin{proof}  
We assert that $P_{n_1}$ contains at most $(l - 1)(t - 1) + l - 3$ edges.  
Indeed, if $P_{n_1}$ contains at least $(l - 1)(t - 1) + l - 2$ edges, then $G[A]$ must have $t$ independent $(l - 1)$-paths, say $P^{(1)}_{l-1} = u_1u_2\cdots u_{l - 1}, \ldots, P^{(t)}_{l-1} = u_{(t - 1)(l - 1) + 1}\cdots u_{t(l - 1)}$.  
Then $u, u_1, \ldots, u_{t(l - 1)}$ generate $t$ edge-disjoint $C_l$ which only intersect at a common vertex $u$, so $G$ contains a copy of $B_{tl}$, a contradiction.  
\end{proof}  
  
With Claim \ref{claim3.5}, we are ready to complete the proof of Theorem \ref{thm2} (ii).  
We first assert that if $P_{n_1} = P_{(t - 1)(l - 1) + l - 2}$, then $n_2 \leq l - 2$. Suppose to the contrary that $n_2 \geq l - 1$, then $G[A]$ has $t$ independent $(l - 1)$-paths.  
Thus $G[A \cup \{u\}]$ would generate $t$ edge-disjoint $C_l$ which only intersect at the common vertex $u$, so $G$ contains a copy of $B_{tl}$, a contradiction.
Let $H^* = H_{\mathcal{OP}}(tl - t - 1, l - 2)$. Clearly, $K_1 \vee H^* = K_1 \vee H_{\mathcal{OP}}((t - 1)(l - 1) + l - 2, l - 2)$.  
If $G[A] \cong H^*$, then $G \cong K_1 \vee H_{\mathcal{OP}}(tl - t - 1, l - 2)$, and we are done.  
If $G[A] \ncong H^*$, by Theorem \ref{thm1} and Claim \ref{claim3.5}, then $G[A]$ is a disjoint union of some paths and $n_1 \leq (l - 1)(t - 1) + l - 2$,  
and so we can obtain $H^*$ by applying a series of $(s_1, s_2)$-transformations to $G[A]$, where $s_2 \leq s_1 \leq (t - 1)(l - 1) + l - 2$.  
As $n \geq 6.5025 \times 2^{(t - 1)(l - 1) + l} \geq 6.5025 \times 2^{s_2 + 2}$, by Lemma \ref{lm2}, we conclude that $\rho < \rho(K_1 \vee H_{\mathcal{OP}}((t - 1)(l - 1) + l - 2, l - 2)) = \rho(K_1 \vee H^*)$, which is impossible by the maximality of $\rho = \rho(G)$.  
This completes the proof.
\end{proof}

\begin{proof}[\textbf{Proof of Theorem \ref{thm3}}]

Let $G$ be the extremal graph with $\textit{spex}_{\mathcal{OP}}(n,(t+1)K_{2})$.
If $t=1$, then $G\cong K_{1,n-1}\cong \textit{SPEX}_{\mathcal{OP}}(n,2K_{2})$.
It remains that $t\geq 2$, we consider the following cases.
\begin{case}{$G$ is a connected graph.}\label{case3.1}

Noting that $F\in\{(t+1)K_{2}|k\geq2\}$ is a subgraph of $K_{1} \vee P_{n-1}$, and is not of $K_{1} \vee ((t-1)K_2\cup(n-2t+1)K_1)$,
then $G\cong K_{1} \vee G[A]$ where $G[A]$ is a disjoint union of paths by Theorem \ref{thm1}.
We assert that $G[A]$ contains at most $2t - 2$ edges. Otherwise, $G[A]$ contains at least $2t - 1$ edges. Then $G[A]$ must have $t$ independent edges, say $u_1u_2, \ldots, u_{2t-1}u_{2t}$.
Take $\bar{u} \in A \setminus \{u_1, \ldots, u_{2t}\}$.
Then $u, \bar{u}, u_1, \ldots, u_{2t}$ would generate $t + 1$ independent edges, and so $G$ contains a copy of $(t + 1)K_2$, a contradiction.  
Let $H^* = H_{\mathcal{OP}}(2t - 1, 1)$. Clearly, $K_1 \vee H^* = K_1 \vee (P_{2t-1} \cup (n - 2t)K_1)$. If $G[A] \cong H^*$, then $G \cong K_1 \vee H^*$, and we are done.
If $G[A] \ncong H^*$, by Theorem \ref{thm1},
then $G[A]$ is a disjoint union of some paths and contains at most $2t - 2$ edges.
Thus, we can obtain $H^*$ by applying a series of $(s_1, s_2)$-transformations to $G[A]$, where $s_2 \leq s_1 \leq 2t - 1$.
As $n \geq 6.5025 \times 2^{2t + 1} \geq 6.5025 \times 2^{s_2 + 2}$, by Lemma \ref{lm1}, we conclude that $\rho < \rho(K_1 \vee H_{\mathcal{OP}}(2t - 1, 1)) = \rho(K_1 \vee H^*)$,
which is impossible by the maximality of $\rho = \rho(G)$.

\end{case}
\begin{case}{$G$ is a disconnected graph.}\label{case3.2}

Let $n \geq N + \frac{3}{2} + 3\sqrt{N - \frac{7}{4}}$, where $N = \max\{1.27 \times 10^{7}, 6.5025 \times 2^{s_2+2}\}$.
Suppose that $G$ is disconnected with components $G_1, G_2, \ldots, G_l$ ($l \geq 2$), and without loss of generality, assume $\rho(G_1) = \rho(G) = \rho$.
For each $i \in \{1, \ldots, l\}$, let $V(G_i) = \{u_{i,1}, u_{i,2}, \ldots, u_{i,n_i}\}$ and $t_i$ be the matching number of $G_i$.
Clearly, $\sum_{i=1}^l t_i \leq t$ and $\sum_{i=1}^l n_i = n$.
If $t_1 \leq t-1$, then constructing $G'$ by removing all edges in $G_2, \ldots, G_l$ and connecting each vertex in these components to $u_{1,1}$ in $G_1$ results in a connected $(t+1)K_2$-free outerplanar graph.
Since $G_1$ is a proper subgraph of $G'$, $\rho(G') > \rho(G_1) = \rho(G)$, contradicting the maximality of $\rho(G)$. Thus, $t_1 = t$ and $G = G_1 \cup (n-n_1)K_1$.
Furthermore, $G_1$ must be extremal in SPEX$_\mathcal{OP}(n_1, (t+1)K_2)$, otherwise $G$ would not be extremal in $SPEX_\mathcal{OP}(n, (t+1)K_2)$.
If $n_1 \geq N$, then $G_1 \cong K_1 \vee (P_{2t-1} \cup (n_1-2t)K_1)$ by Case \ref{case3.1}.
Therefore, $G$ is a proper subgraph of $K_1 \vee (P_{2t-1} \cup (n-2t)K_1)$, again contradicting the maximality of $\rho(G)$.
Next, we consider $n_1 < N$. For $t \geq 2$, by Lemma \ref{lm2}, we obtain
$$\rho=\rho(G_1)\leq \frac{3}{2}+\sqrt{n_1-\frac{7}{4}} < \frac{3}{2}+\sqrt{N-\frac{7}{4}} \leq \sqrt{n-1}=\rho(K_{1, n-1})\leq\rho(M_t)$$
as $n\geq N+\frac{3}{2}+3\sqrt{N-\frac{7}{4}}$ and $K_{1,n-1}$ is a subgraph of $M_t.$ This contradicts the maximality of $\rho(G)$.
For $t=1$, $G$ must be a proper subgraph of $K_{1,n-1} = M_t$, leading to a similar contradiction.
Therefore, G must be connected. By Case \ref{case3.1}, we conclude that $G \cong M_t$, which completes the proof.
\end{case}
\end{proof}

\section{Proof of theorem \ref{thm4}}\label{sc4}
In this section, we determine the SPEX$_{\mathcal{P}}(n,B_{tl})$ for $t\geq3$ and $l\geq3$ by proofing Theorem \ref{thm4}.
Prior to this, we first list some useful lemmas from \cite{X.L. Wang}.

\begin{lemma}\cite{X.L. Wang}.\label{lm3}  
Let $F$ be a planar graph not contained in $K_{2,n-2}$ where $n \geq \max\{2.67 \times 9^{17}, \frac{10}{9}|V(F)|\}$.  
Suppose that $G$ is a connected extremal graph in $\mathrm{SPEX}_{\mathcal{P}}(n, F)$ and
$X$
is the positive eigenvector of $\rho := \rho(G)$ with $\max_{v \in V(G)} x_{v} = 1$.   
Then the following two statements hold.  
 \begin{description}
   \item[(i)] There exist two vertices $u',u''\in V( G)$ such that $R:=N_G(u')\cap N_G( u'')=V( G)\setminus\{u', u''\}$ and $x_{u'}= x_{u''}= 1.$ 
In particular, $G$ contains a copy of $K_{2, n-2}.$
   \item[(ii)] The subgraph $G[R]$ of $G$ induced by $R$ is a disjoint union of some paths and cycles.
Moreover, if $G[R]$ contains a cycle then it is exactly a cycle, i.e., $G[R] \cong C_{n-2}$, and
if $u'u''\in E(G)$ then $G[R]$ is a disjoint union of some paths.
 \end{description}
 \end{lemma}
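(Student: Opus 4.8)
The plan is to run the Perron-vector argument of Theorem \ref{thm1} again, replacing the outerplanar constants by their planar counterparts: the edge bound $e(G)\le 3n-6$ and the exclusion of $K_{3,3}$ and $K_5$ in place of $K_{2,3}$ and $K_4$. First I would fix the order of $\rho$. Since $F$ is not contained in $K_{2,n-2}$, the planar graph $K_{2,n-2}$ is $F$-free, so $\rho\ge\rho(K_{2,n-2})=\sqrt{2n-4}$; combined with $\rho^2\le\mathrm{tr}(A^2)=2e(G)\le 6n-12$ this gives
\[
\sqrt{2n-4}\le\rho\le\sqrt{6n-12}.
\]
Squaring the eigen-equation yields, for any vertex $u$,
\[
\rho^2 x_u=\sum_{y\in N(u)}\sum_{z\in N(y)}x_z=d_G(u)\,x_u+\sum_{z\ne u}c(u,z)\,x_z,\qquad c(u,z):=|N(u)\cap N(z)|.
\]
Because $\sum_{z}c(u,z)=\sum_{y\in N(u)}(d_G(y)-1)\le 2e(G)\le 6n$, any vertex $z$ with $x_z=O(1/\sqrt n)$ contributes only $O(\sqrt n)$ to the sum. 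This is the engine that converts spectral data into degree estimates, exactly as in Claims \ref{claim1.2} and \ref{claim1.3}.

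The heart of the proof, and the real departure from the single-apex outerplanar case, is to produce the two universal vertices of part (i). I would first show, as in Claim \ref{claim1.3}, that every entry is $O(1/\sqrt n)$ except on a bounded ``heavy'' set: the key planar input is that at most two vertices can have degree exceeding the relevant threshold, since three vertices of degree $n-O(\sqrt n)$ would share $n-O(\sqrt n)$ common neighbours and hence contain $K_{3,3}$. A counting argument then shows the heavy set is \emph{exactly} two: if only one vertex $u'$ (with $x_{u'}=1$) were heavy, every other $x_z$ would be $O(1/\sqrt n)$ and the displayed identity would give $\rho^2=d_G(u')+O(\sqrt n)\le n-1+O(\sqrt n)<2n-4$, contradicting the lower bound. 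Hence there are precisely two heavy vertices $u',u''$, and the same identity forces $d_G(u'),d_G(u'')\ge n-O(\sqrt n)$. Finally I would promote ``almost universal'' to ``universal'' by the switching argument of Claim \ref{claim1.5}: for any vertex not adjacent to both $u'$ and $u''$, deleting its incident edges and re-attaching it as a common neighbour of $u'$ and $u''$ keeps the graph planar (a new common neighbour can be inserted into one of the quadrilateral faces of the large $K_{2,m}$ spanned by $u'$ and $u''$) and $F$-free (using $|R|>|V(F)|$ to relocate any copy of $F$ onto fresh common neighbours), while strictly increasing $\rho$; maximality forces this exceptional set to be empty. This gives $N_G(u')\cap N_G(u'')=V(G)\setminus\{u',u''\}=R$ and $x_{u'}=x_{u''}=1$, which is part (i) together with the copy of $K_{2,n-2}$. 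I expect this step---pinning the heavy set to exactly two vertices and carrying out the planarity-preserving reconnection to \emph{both} apexes---to be the main obstacle, since it has no analogue in the one-apex argument of Theorem \ref{thm1}, where two vertices already cannot share three common neighbours.

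For part (ii) the structure of $G[R]$ follows from planarity once (i) is in hand. Since $u'$ and $u''$ are adjacent to every vertex of $R$, a vertex $v\in R$ with three neighbours $a,b,c$ inside $R$ would give a $K_{3,3}$ on parts $\{v,u',u''\}$ and $\{a,b,c\}$; therefore $d_{G[R]}(v)\le 2$ for all $v$, so $G[R]$ is a disjoint union of paths and cycles. If $u'u''\in E(G)$, then any cycle of $G[R]$ together with the edge $u'u''$ and the two apexes contains a $K_5$-minor, so $G[R]$ is cycle-free, i.e.\ a union of paths. If $u'u''\notin E(G)$ and $G[R]$ contains a cycle $C$, then $\{u',u''\}\vee C$ is a double wheel whose planar embedding places $u'$ and $u''$ in the two regions separated by $C$; a further vertex of $R\setminus V(C)$ adjacent to both apexes could not be embedded without crossing $C$, forcing $R=V(C)$ and $G[R]\cong C_{n-2}$. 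This yields part (ii) and completes the plan.
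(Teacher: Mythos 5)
First, a point of comparison: the paper itself gives no proof of Lemma \ref{lm3} --- it is quoted from Wang, Huang and Lin \cite{X.L. Wang} --- so your attempt has to be measured against the Tait--Tobin-style two-apex argument carried out there. Your outline correctly identifies that scaffolding, and your part (ii) is sound and essentially the argument used in the source: $d_{G[R]}(v)\le 2$ via the $K_{3,3}$ on parts $\{v,u',u''\}$ and $\{a,b,c\}$; a $K_5$-minor from a cycle in $G[R]$ plus the edge $u'u''$; and, when $u'u''\notin E(G)$, the Jordan-curve separation of $u'$ and $u''$ by any cycle of $G[R]$, forcing $G[R]\cong C_{n-2}$.

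The genuine gap is in part (i), at exactly the step you flag as the main obstacle. Two problems. First, the engine you import from Claim \ref{claim1.2} does not transfer: its proof used $K_{2,3}$-freeness of outerplanar graphs ($d_{A}(v)\le 2$), whereas in a planar graph two vertices may share arbitrarily many neighbours ($K_{2,m}$ is planar for all $m$), so from $\rho^2 x_u$ one only gets $d_G(u)\ge c\,nx_u-O(\sqrt n)$ with $c<1$ unless the entries on $N(u)$ are already known to be small --- which is what is being proved; as stated the argument is circular. Second, the assertion that ``at most two vertices can have degree exceeding the relevant threshold'' is false for any threshold of order $\sqrt n$: a planar graph can contain $\Theta(\sqrt n)$ vertices of degree at least $16\sqrt n$ (disjoint stars, say). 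The $K_{3,3}$ obstruction, via $|N_1\cap N_2\cap N_3|\ge d_1+d_2+d_3-2n$, only limits the number of vertices of degree roughly $\ge 2n/3$, and lifting two vertices into that degree regime is precisely the content of the multi-stage bootstrap in Tait--Tobin and in \cite{X.L. Wang}: one first extracts two vertices with eigenvector entries bounded below and degree linear in $n$, then successively refines to degree $n-O(\sqrt n)$ and entries $1-o(1)$. Your clean dichotomy (entry $O(1/\sqrt n)$ versus degree $n-O(\sqrt n)$) skips this bootstrap, so the pinning of the heavy set to exactly two vertices is unsupported. Once two vertices of degree $n-O(\sqrt n)$ are in hand, the remainder of your plan --- excluding a third heavy vertex by the inclusion-exclusion count above, the reattachment argument preserving planarity and $F$-freeness (using $|R|>|V(F)|$ to relocate any copy of $F$), and part (ii) --- does go through along the lines of the cited proof.
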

 
\begin{lemma}\cite{X.L. Wang}.\label{lm4}
Suppose further that G contains $K_{2, n- 2}$ as a subgraph. Let $u_{1}, u_{2}$ be the two vertices of $G$ that have degree $n-2$ in $K _{2, n- 2}$.
For any vertex $u\in V( G) \setminus \{ u_1, u_2\} $, we have 
$$\frac 2\rho \leq x_u\leq \frac 2\rho + \frac {4. 496}{\rho ^2}.$$
\end{lemma}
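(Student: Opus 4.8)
The plan is to mirror the bootstrapping argument carried out for Claim \ref{claim3.1} in the outerplanar setting, now adapted to the \emph{two} dominating vertices produced by Lemma \ref{lm3}. First I would invoke Lemma \ref{lm3}: since $G$ contains $K_{2,n-2}$ as a subgraph, there are vertices $u_1,u_2$ with $x_{u_1}=x_{u_2}=1$ and $R:=V(G)\setminus\{u_1,u_2\}=N_G(u_1)\cap N_G(u_2)$, and moreover $G[R]$ is a disjoint union of paths and cycles. In particular every $u\in R$ is adjacent to both $u_1$ and $u_2$ and has at most two neighbours inside $R$, i.e.\ $d_R(u)\le 2$. Since $V(G)=\{u_1,u_2\}\cup R$, the eigenvalue equation at $u\in R$ reads
\[
\rho x_u = x_{u_1}+x_{u_2}+\sum_{v\in N_R(u)}x_v = 2+\sum_{v\in N_R(u)}x_v,
\]
where the constant $2$ plays the role that the constant $1$ played in the single-apex outerplanar case.

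The lower bound is then immediate: all eigenvector entries are positive, so $\rho x_u\ge 2$ and hence $x_u\ge 2/\rho$. For the upper bound I would set $M:=\max_{u\in R}x_u$ and apply the displayed equation at a vertex attaining this maximum; using $d_R(u)\le 2$ together with $x_v\le M$ for each neighbour $v\in N_R(u)$ gives $\rho M\le 2+2M$, that is, $M\le \tfrac{2}{\rho-2}$. Feeding this back into the same equation for an arbitrary $u\in R$ yields $\rho x_u\le 2+2M\le 2+\tfrac{4}{\rho-2}$, so that
\[
x_u\le \frac{2}{\rho}+\frac{4}{\rho(\rho-2)}.
\]

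It then remains only to absorb the tail $\tfrac{4}{\rho(\rho-2)}$ into the claimed $\tfrac{4.496}{\rho^2}$. Since $G\supseteq K_{2,n-2}$ we have $\rho\ge \rho(K_{2,n-2})=\sqrt{2(n-2)}$, which under $n\ge 2.67\times 9^{17}$ is enormous; in particular $\rho$ comfortably exceeds the numerical threshold (roughly $18.1$) beyond which $\tfrac{4}{\rho(\rho-2)}=\tfrac{4}{\rho^2}\cdot\tfrac{1}{1-2/\rho}\le \tfrac{4.496}{\rho^2}$, since that inequality rearranges to $0.496\,\rho\ge 8.992$. This gives the stated upper bound and completes the estimate. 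The only genuinely delicate point is this final simplification: one must confirm that the large-$n$ hypothesis forces $\rho$ past the threshold governing the precise constant $4.496$; every other step is the verbatim two-step bootstrap already executed for Claim \ref{claim3.1}, with the degree bound $d_R(u)\le 2$ supplied by the path-and-cycle structure of $G[R]$ in Lemma \ref{lm3}(ii).
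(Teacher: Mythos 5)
Your proof is correct. Note first that the paper offers no proof of Lemma \ref{lm4} at all --- it is imported verbatim from Wang, Huang and Lin \cite{X.L. Wang} --- so the relevant in-paper comparison is Claim \ref{claim3.1}, whose single-apex bootstrap you have adapted faithfully to the two-apex setting: the eigenequation $\rho x_u = 2 + \sum_{v\in N_R(u)}x_v$ (which does require the identification $x_{u_1}=x_{u_2}=1$ from Lemma \ref{lm3}(i), and you correctly invoke it, since otherwise even the lower bound $2/\rho$ fails), the degree bound $d_R(u)\le 2$ (which, incidentally, also follows from planarity alone: three neighbours $w_1,w_2,w_3\in R$ of some $v\in R$ would give a $K_{3,3}$ on $\{u_1,u_2,v\}$ and $\{w_1,w_2,w_3\}$), and the max-entry feedback giving $x_u\le \frac{2}{\rho}+\frac{4}{\rho(\rho-2)}$. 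Your closing numerics are right: $\frac{4}{\rho(\rho-2)}\le\frac{4.496}{\rho^2}$ rearranges to $0.496\,\rho\ge 8.992$, i.e.\ $\rho\ge 18.13$, which holds with vast room to spare since $\rho\ge\rho(K_{2,n-2})=\sqrt{2(n-2)}$ under the standing hypothesis $n\ge 2.67\times 9^{17}$; indeed your bound is slightly sharper than the stated one for such $\rho$. The argument is complete as written.
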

 
 \begin{definition}\cite{X.L. Wang}.\label{de2}
Let $s_1$ and $s_2$ be two integers with $s_1\geq s_2\geq1$, and let $H=P_{s_1}\cup P_{s_2}\cup H_0$, where $H_0$ is a disjoint union of paths. We say that $H^*$ is an $(s_1,s_2)$-transformation of $H$ if
$$H^*:=\begin{cases}P_{s_1+1}\cup P_{s_2-1}\cup H_0&\text{if }s_2\geq2,\\P_{s_1+s_2}\cup H_0&\text{if }s_2=1.\end{cases}$$
Clearly, $H^*$ is a disjoint union of paths, which implies that $K_2 \vee H^*$ is planar.
If $G[R]\cong H$, then we shall show that $\rho(K_2 \vee H^*)>\rho(K_2 \vee H)$ for sufficiently large $n.$
\end{definition}

\begin{lemma}\cite{X.L. Wang}.\label{lm5}
Let $H$ and $H^*$ be the two graphs as shown in Definition 2.
When $n\geq\max\{2.67\times9^{17},10.2\times2^{(t-1)(l-1)+l-3}+2\}$, we have $\rho(K_{2} \vee H^*) > \rho(K_{2} \vee H)$.
\end{lemma}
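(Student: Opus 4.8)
The plan is to prove Lemma~\ref{lm5} by mirroring the argument used for its outerplanar counterpart, Lemma~\ref{lm1}, with the single apex vertex replaced by the two dominating vertices $u_1,u_2$ supplied by Lemma~\ref{lm3}. Throughout write $\rho=\rho(K_2\vee H)$ and $\rho^*=\rho(K_2\vee H^*)$, let $X$ be the normalized Perron vector of $K_2\vee H$ with $x_{u_1}=x_{u_2}=1$, and recall from Lemma~\ref{lm4} that every vertex $u\in R$ satisfies $\frac{2}{\rho}\le x_u\le \frac{2}{\rho}+\frac{4.496}{\rho^2}$. The only structural change from the outerplanar setting is that each eigen-equation along a path now reads $\rho x_{v_j}=x_{u_1}+x_{u_2}+\sum_{v\in N_R(v_j)}x_v=2+\sum_{v\in N_R(v_j)}x_v$, i.e.\ the additive constant $1$ becomes $2$; everything else carries over. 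Here $H=P_{s_1}\cup P_{s_2}\cup H_0$ with $P':=v_1\cdots v_{s_1}$ and $P'':=w_1\cdots w_{s_2}$, as in Definition~\ref{de2}.

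First I would dispose of the two easy regimes. If $s_2=1$, then $H$ is a proper subgraph of $H^*$, hence $K_2\vee H$ is a proper (connected) subgraph of $K_2\vee H^*$, and strict monotonicity of the spectral radius under edge addition gives $\rho^*>\rho$. If $s_2=2$, I would compare $H^*$ to the graph $H'$ obtained from $H$ by deleting an endpoint edge of $P'$ and reattaching that endpoint to $P''$; since $H'\cong H^*$, the Rayleigh quotient yields $\rho^*-\rho\ge \frac{2}{X^TX}(x_{w_1}-x_{v_1})x_{v_2}\ge 0$ (choosing the labelling so the relocated endpoint moves toward the larger entry), and equality is excluded by a short eigenvector contradiction comparing $\rho x_{v_1}$ with $\rho^* x_{v_1}$, exactly as in Lemma~\ref{lm1}.

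The substance is the range $s_2\ge 3$. Here I would introduce nested boxes $A_i=\left[\frac{2}{\rho}-\frac{c\,2^i}{\rho^2},\frac{2}{\rho}+\frac{c\,2^i}{\rho^2}\right]$ and $B_i=\left[-\frac{c'\,2^i}{\rho^2},\frac{c'\,2^i}{\rho^2}\right]$ with constants $c,c'$ calibrated to the bound $x_u\le \frac{2}{\rho}+\frac{4.496}{\rho^2}$ of Lemma~\ref{lm4}, and prove by induction on $i$ that the scaled consecutive differences $\rho^i(x_{v_{i+1}}-x_{v_i})$ and $\rho^i(x_{w_{i+1}}-x_{w_i})$ lie in $A_i$, while the cross-differences $\rho^i(x_{v_i}-x_{w_i})$ lie in $B_i$, for $i$ up to roughly $s_2/2$. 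The inductive step uses only the recurrence $\rho(x_{v_{j+1}}-x_{v_j})=(x_{v_j}-x_{v_{j-1}})+(x_{v_{j+2}}-x_{v_{j+1}})$ together with the box arithmetic $A_{i-1}+B_{i-1}\subseteq A_i$ and $B_{i-1}+B_{i-1}\subseteq B_i$. Since $n\ge 10.2\times 2^{(t-1)(l-1)+l-3}+2$ forces $\rho$ to exceed the doubling factor $2^{s_2/2}$ accumulated in the boxes, the positive leading term $\rho^{-(i+1)}$ dominates, yielding the strict monotonicities $x_{v_{i+1}}>x_{v_i}$ and $x_{w_{i+1}}>x_{w_i}$ together with the comparisons $x_{v_{i+1}}>x_{w_i}$ and $x_{w_{i+1}}>x_{v_i}$ throughout the admissible index range. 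Finally, choosing $t_1,t_2$ with $t_1+t_2=s_2-1$ (taking $t_1=t_2=\frac{s_2-1}{2}$ when $s_2$ is odd, and $t_1=\frac{s_2}{2},\,t_2=\frac{s_2}{2}-1$ when $s_2$ is even), I would realize $H^*$ by deleting $v_{t_1}v_{t_1+1},\,w_{t_2}w_{t_2+1}$ and adding $v_{t_1}w_{t_2},\,v_{t_1+1}w_{t_2+1}$, so that the Rayleigh quotient gives $\rho^*-\rho\ge \frac{2}{X^TX}(x_{v_{t_1+1}}-x_{w_{t_2}})(x_{w_{t_2+1}}-x_{v_{t_1}})\ge 0$; the monotonicities make both factors positive in the odd case, and in the even case a symmetry-plus-eigenvector argument rules out equality, giving $\rho^*>\rho$.

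I expect the main obstacle to be the inductive box estimate for $s_2\ge 3$: one must choose $c,c'$ so that the containments $A_{i-1}+B_{i-1}\subseteq A_i$ and $B_{i-1}+B_{i-1}\subseteq B_i$ hold simultaneously while keeping the center of $A_i$ anchored at $\frac{2}{\rho}$ (the shift from a single apex contribution $1$ to the two-apex contribution $2$ must be absorbed correctly), and then verify that the explicit threshold on $n$ — in particular the exponential term $2^{(t-1)(l-1)+l-3}$, which upper-bounds the longest path length arising in the $B_{tl}$ application — guarantees $\rho$ is large enough for the leading terms to overwhelm the $2^i$ growth at every step up to $i\approx s_2/2$. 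The even-$s_2$ endgame, where the naive swap only gives $\rho^*\ge\rho$ and strictness must be extracted from the eigenvector identity $\rho x_{v_{t_1}}=2+x_{v_{t_1-1}}+x_{v_{t_1+1}}$, is the second delicate point and is handled exactly as in the final paragraph of the proof of Lemma~\ref{lm1}.
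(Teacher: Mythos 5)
Your proposal is correct and takes essentially the same approach as the source: the paper itself imports Lemma~\ref{lm5} from \cite{X.L. Wang} without reproving it, and both that proof and the paper's own proof of the outerplanar analogue (Lemma~\ref{lm1}), which you explicitly mirror, proceed exactly as you describe — dispose of $s_2\in\{1,2\}$ by subgraph monotonicity and a single edge relocation, then for $s_2\geq 3$ run the box induction on the scaled differences (now centered at $\frac{2}{\rho}$ with constants calibrated to the bound $x_u\leq\frac{2}{\rho}+\frac{4.496}{\rho^2}$ of Lemma~\ref{lm4}), and close with the two-edge swap at $t_1+t_2=s_2-1$, extracting strictness in the even case from the eigen-equation. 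Your identification of the two delicate points (the containments $A_{i-1}+B_{i-1}\subseteq A_i$, $B_{i-1}+B_{i-1}\subseteq B_i$ against the threshold $\rho\geq\sqrt{2n-4}>c\cdot 2^{s_2/2}$, and the even-$s_2$ endgame) matches where the actual proof does its work, so no gap.
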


\begin{proof}[\textbf{Proof of Theorem \ref{thm4}}]
With Lemmas \ref{lm3} and \ref{lm4}, we have the following claim.
\begin{claimmmm}\label{claim4.1}  
The vertices $u'$ and $u''$ are adjacent. Furthermore, the subgraph $G[R]$ of $G$ induced by $R$ is a disjoint union of some paths.  
\end{claimmmm}  
  
\begin{proof}  
Suppose to the contrary that $u'u'' \notin E(G)$. Then we assert that $G[A]$ contains at most $t - 1$ independent $(l - 1)$-paths.
If not, $G$ would contain $t$ edge-disjoint $C_l$ that intersect in one common vertex $u'$, which is impossible as $G$ is $B_{tl}$-free. We denote the edges of these independent $(l - 1)$-paths by $E(IP)$.
Let $G' = G + u'u'' - \sum_{u_iu_j \in E(IP)} u_iu_j$. Clearly, $G'$ is planar and $B_{tl}$-free. By Lemma \ref{lm4}, we have  
\[  
\begin{aligned}  
\rho(G') - \rho &\geq \frac{2}{X^TX} \left( x_{u'}x_{u''} - \sum_{u_iu_j \in E(IP)} x_{u_i}x_{u_j} \right) \\  
&\geq \frac{2}{X^TX} \left( 1 - (t - 1)(l - 2) \cdot \left( \frac{2}{\rho} + \frac{4.496}{\rho^2} \right)^2 \right) \\  
&\geq \frac{2}{X^TX} \left( 1 - (t - 1)(l - 2) \cdot \left( \frac{2}{\sqrt{2n - 4}} + \frac{4.496}{2n - 4} \right)^2 \right) \\  
&> 0,  
\end{aligned}  
\]  
as $\rho \geq \sqrt{2n - 4}$ and $n \geq \max\{2.67 \times 9^{17}, 10.2 \times 2^{(t - 1)(l - 1) + l - 3} + 2\}$.
Hence, $\rho(G') > \rho$, contrary to the maximality of $\rho = \rho(G)$. Therefore, $u'u'' \in E(G)$.
From Lemma \ref{lm3}, we establish that the subgraph $G[R]$ of $G$ induced by $R$ is a disjoint union of some paths.  
\end{proof}

From the Claim \ref{claim4.1}, we can assume that $G[R]=\cup_{i=1}^qP_{n_i}$, where $q\geq2$ and $n_1\geq n_2\geq\cdots\geq n_q.$
Let $H$ be a union of disjoint paths.
We use $n_i(H)$ to denote the order of the $i$-th longest path of $H$ for any $i\in\{1,...,q\}.$ 

 \begin{claimmmm}\label{claim4.2}
If $H$ is a union of disjoint paths and $K_2 \vee H$ is $B_{tl}$-free,
then $n_1+n_2\leq (t-1)(l-1)+l-3$.
\end{claimmmm}

\begin{proof}  
Suppose to the contrary that $n_1 + n_2 \geq (t - 1)(l - 1) + l - 2$.  
Let $a$ and $b$ be the maximum number of independent $(l - 1)$-paths in $P_{n_1}$ and $P_{n_2}$, respectively, i.e.,  
\[  
a = \left\lfloor \frac{n_1}{l - 1} \right\rfloor \quad \text{and} \quad b = \left\lfloor \frac{n_2}{l - 1} \right\rfloor.  
\]  
This implies that $n_1 - a(l - 1) \leq l - 2$ and $n_2 - b(l - 1) \leq l - 2$.
Next, we assert that $a + b = t - 1$. Clearly, if $a + b > t - 1$, then $G$ would contain a $B_{tl}$, which is a contradiction.  
If $a + b < t - 1$, recall that $n_1 + n_2 \geq (t - 1)(l - 1) + l - 2$, then 
\[  
n_1 + n_2 - a(l-1) - b(l-1) \geq 2l - 3.  
\]  
On the other hand, since $n_1 - a(l-1) \leq l - 2$ and $n_2 - b(l-1) \leq l - 2$, we have  
\[  
n_1 + n_2 - a(l-1) - b(l-1) \leq 2l - 4 < 2l - 3,  
\]
which is a contradiction. Thus, $a + b = t - 1$. Therefore, $G[P_{n_1} \cup P_{n_2} \cup \{u', u''\}]$ would generate a $B_{tl}$ due to $n_1 + n_2 \geq (t - 1)(l - 1) + l - 2$, a contradiction.   
\end{proof}

With Claims \ref{claim4.1} and \ref{claim4.2}, we are ready to complete the proof of Theorem \ref{thm4}.
Let $H^* = H_{\mathcal{P}}(tl-t-l, l-2)$. Clearly, $K_2 + H^* = K_2 \vee H_{\mathcal{P}}((t-2)(l-1)+l-2, l-2)$, which means that $n_1 = (t-2)(l-1)+l-2$ and $n_2 = l-2$.  
If $G[R] \cong H^*$, then $G \cong K_2 \vee H_{\mathcal{P}}(tl-t-l, l-2)$, and we are done.
If $G[R] \ncong H^*$, by Lemma \ref{lm3} and Claim \ref{claim4.2}, then $G[R]$ is a disjoint union of some paths and $n_1 + n_2 \leq (l-1)(t-1) + l-3$.
Now, we shall consider the quantity $r$ which represents the number of independent $(l-1)$-paths in $P_{n_1}$. 
Firstly, we can get $r \leq t-2$ for $l = 3,4$ and $r \leq t-1$ for $l \geq 5$. Otherwise, there would be $B_{tl}$ in $G$, a contradiction.
Next, we will follow the following cases.
  
\begin{casee}\label{case4.1}  
If $r \leq t-2$, then we can obtain $H^*$ by applying a series of $(s_1, s_2)$-transformations to $G[R]$, where $s_2 \leq s_1 \leq (t-1)(l-1) + l-3$.  
As $n \geq 10.2 \times 2^{(t-1)(l-1)+l-3} + 2 \geq 10.2 \times 2^{s_2} + 2$, we conclude that $\rho < \rho(K_2 \vee H^*) = \rho(K_2 \vee H_{\mathcal{P}}((t-2)(l-1)+l-2, l-2))$ by Lemma \ref{lm5},  
which is impossible by the maximality of $\rho = \rho(G)$.
Up to this case, we can conclude that $G\cong K_2 \vee H_{\mathcal{P}}((t-2)(l-1)+l-2, l-2))$ for $l=3,4$.
\end{casee}  
  
\begin{casee}  
If $r = t-1$, then we have the following claims.
\begin{claimmmm}\label{claim4.3}  
Let $H$ be an union of $q$ vertex–disjoint paths and $\bar{n}_1=n_1-(t-1)(l-1)$.
Then $K_2 \vee H$ does not contain $B_{tl}$ if and only if $\bar{n}_1+n_2 \leq l-3$ or $n_2+n_3 \leq l-3$
\end{claimmmm}

\begin{proof}
We denote that $P_{n_1} = v_1v_2 \cdots v_{n_1}$, $P' = v_1v_2 \cdots v_{(t-1)(l-1)}$ and $P_{\bar{n}_1} = v_{(t-1)(l-1)+1} \cdots v_{n_1}$.
Observing that the longest cycle which belongs to $K_2 \vee (H-P')$ and intersects with $B_{(t-1)l}$ only at ${u'}$ is of order $\bar{n}_1 + n_2+2$ or $n_2+n_3+2$.
Furthermore, $K_2 \vee (P_{\bar{n}_1} \cup P_{n_2})$ or $K_2 \vee (P_{n_2} \cup P_{n_3})$ contains a cycle $C_i$ for each $i \in \{3, …,\max\{\bar{n}_1+ n_2 + 2, n_2+n_3 +2\}\}.$
Therefore, $\bar{n}_1+n_2 \leq l-3$ or $n_2+n_3 \leq l-3$ if and only if $K_2 \vee H$ is $B_{tl}$‐free.
Hence, the claim holds.
\end{proof}

\begin{claimmmm}\label{claim4.4}  
$n_i=n_2$ for $i \in \{3,\ldots,q-1\}$.
\end{claimmmm}

\begin{proof}
Suppose to the contrary, then there exists an $i_0 = \min\{i|3 \leq i \leq q-1\}$.
If $i_0 > 3$, then let $H'$ be an $(n_{i_0},n_q)$-transformation. Thus, $n_1(H')=n_1$(it means $\bar{n}_1(H') = \bar{n}_1$), $n_2(H')=\max\{n_2,n_{i_0+1}\}$ and $n_3(H')=n_3$. Therefore, $\bar{n}_1(H')+n_2(H') \leq l-3$ or $n_2(H')+n_3(H') \leq l-3$. By Claim \ref{claim4.3}, $K_2 \vee H'$ is $B_{tl}$-free. However, by Lemma \ref{lm5}, we have $\rho(K_2 \vee H') > \rho$, a contradiction. If $i_0 =3$, we first suppose that there exists a $j$ such that $n_j < n_3 \leq n_2-1$, where $4 \leq j \leq q-1$. Let $H''$ be the $(n_j,n_t)$-transformation of $G[R]$, where $4 \leq j \leq q-1.$ Then $\bar{n}_{1}(H'') = \bar{n}_{1}$, $n_{2}(H'') = n_{2}$ and $n_{3}(H'')=\max\{n_{3},n_{j}+1\}=n_{3}.$ Using the methods similar to those for $i_0>3$, we can also get a contradiction. Then we suppose that $n_j=n_3$ for any $j\in\{4,\cdots,q-1\}$. If $\bar{n}_1+n_2 \leq l-3$, let $H^{(3)}$ be the $(n_3,n_q)$-transformation of $G[R]$, then $n_{1}(H^{(3)})=n_{1},n_{2}(H^{(3)})=\max\{n_{2},n_{3}+1\}=n_{2}.$ Thus we have $\bar{n}_{1}(H^{(3)})+n_{2}(H^{(3)}) \leq l-3$. If $n_2+n_3 \leq l-3$, let $H^{(4)}$be the $(n_1,n_2)$-transformation of $G[R]$, then $n_1(H^{(4)})=n_1+1$, $n_{2}(H^{(4)})=\max\{n_{2}-1,n_{3}\}=n_{2}-1$ and $n_{3}(H^{(4)})=n_{3}.$ Thus we get $n_2(H^{(4)})+n_3(H^{(4)})=n_2+n_3-1 \leq l-4<l-3.$ It follows from Claim \ref{claim4.3} that neither $K_2+H^{(3)}$ nor $K_2+H^{(4)}$ contains $B_{tl}.$ By Lemma \ref{lm5},we get $\rho(K_{2} \vee H^{(3)})>\rho(G)$ and $\rho(K_{2} \vee H^{(3)})>\rho(G)$, contraction.
\end{proof}

Now we assert that $G[R] \cong H_{\mathcal{P}}((t-1)(l-1)+l-3-x, x)$ for a fixed integer $x \in [1, \lfloor\frac{l-3}{2}\rfloor]$, and then $G \cong K_2 \vee H_{\mathcal{P}}((t-1)(l-1)+l-3-x, x)$.  
Otherwise, we can obtain $H_{\mathcal{P}}((t-1)(l-1)+l-3-x, x)$ by applying a series of $(s_1, s_2)$-transformations to $G[R]$, where $s_2 \leq s_1 \leq (t-1)(l-1) + l-4$.  
As $n \geq 10.2 \times 2^{(t-1)(l-1)+l-3} + 2 \geq 10.2 \times 2^{s_2} + 2$, we have that $\rho < \rho(K_2 \vee H_{\mathcal{P}}((t-1)(l-1)+l-3-x, x)$ by Lemma \ref{lm5}, which is impossible by the maximality of $\rho = \rho(G)$.

Next, we assert that $\rho(K_2 \vee H_{\mathcal{P}}((t-1)(l-1)+l-3-x, x) < \rho(K_2 \vee H^*) = \rho(K_2 \vee H_{\mathcal{P}}((t-2)(l-1)+l-2, l-2))$.  
We use $P^{(i)}$ and $n_i(H)$ to denote the $i$th longest path of $H_{\mathcal{P}}((t-1)(l-1)+l-3-x, x)$ and the order of the $i$-th longest path of $H_{\mathcal{P}}((t-1)(l-1)+l-3-x, x)$ for any $i \in \{1, \ldots, q\}$, respectively. Assuming that $P^{(1)}=v_1v_2 \cdots v_{(t-1)(l-1)+l-3-x}$.
Since $t \geq 3$, $l \geq 5$ and $n_1 = (t-1)(l-1)+l-3-x$, there exists an edge $w'w''=v_{(t-2)(l-1)+l-2}v_{(t-1)(l-1)} \in E(P^{(1)})$. 
Note that $n-2 = \sum_{i=1}^{q} n_i \leq n_1 + (q-1)n_2$, it implies that $q-2 \geq n_3 + 1$ by $n \geq 10.2\times2^{(t-1)(l-1)+l-4}+2 > (t-1)(l-1)+2(l-3)+{\lfloor\frac{l-3}{2}\rfloor}^2+ 2$.  
Then $P^{(2)}, P^{(3)}, \ldots, P^{(n_3+1)}$ are paths of order $n_2$.  
And assume that $P^{(q)} = w_1w_2 \cdots w_{n_3}$.
Let $G'$ be obtained from $G$ by (i) deleting $w'w''$ and joining $w''$ to an endpoint of $P^{(n_3+2)}$ and (ii) deleting all edges of $P^{(q)}$ and joining $w_i$ to an endpoint of $P^{(i+1)}$ for each $i \in \{1, \ldots, n_3\}$, 
it means that $G'$ is obtained from $G$ by deleting $n_3$ edges and adding $n_3 + 1$ edges.  
By Lemma \ref{lm4},  
\[  
\frac{4}{\rho^2} \leq x_{u_i}x_{u_j} \leq \frac{4}{\rho^2} + \frac{24}{\rho^3} + \frac{36}{\rho^4} < \frac{4}{\rho^2} + \frac{25}{\rho^3}  
\]  
for any vertices $u_i, u_j \in R$. Therefore,  
\[  
\rho(G') - \rho \geq \frac{X^T(A(G') - A(G))X}{X^TX} > \frac{2}{X^TX} \left( \frac{4(n_3 + 1)}{\rho^2} - \frac{4n_3}{\rho^2} - \frac{25n_3}{\rho^3} \right) > 0,  
\]  
where $n_3 \leq \lfloor\frac{l-3}{2}\rfloor \leq \frac{4}{25} \sqrt{2n-4} \leq \frac{4}{25} \rho$ as $n \geq \frac{625}{32} \lfloor\frac{l-3}{2}\rfloor^2 + 2$.
However, $P_{n_1}$ in $G'$ has only $(t-2)$ independent $(l-1)$-paths, which means that $r \leq t-2$. Then, by Case \ref{case4.1},
we have $\rho(K_2 \vee H_{\mathcal{P}}((t-1)(l-1)+l-3-x, x) = \rho < \rho(G') \leq \rho(K_2 \vee H_{\mathcal{P}}((t-2)(l-1)+l-2, l-2))$,  
which is impossible by the maximality of $\rho = \rho(G)$, leading to a contradiction.  
\end{casee}
\end{proof}

\section{Acknowledgement}
We would like to show our great gratitude to anonymous referees for their valuable suggestions which greatly improved the quality of this paper.

\end{document}